\definecolor{webgreen}{rgb}{0,.5,0}
\def\C{{\mathds{C}}}
\def\N{{\mathds{N}}}
\def\Z{{\mathds{Z}}}
\def\1{{\bf 1}}
\def\lcm{\operatorname{lcm}}
\newtheorem{theorem}{Theorem}
\newtheorem{lemma}[theorem]{Lemma}
\newtheorem{corollary}[theorem]{Corollary}
\newtheorem{remark}[theorem]{Remark}
\begin{document}

\title{{\bf Short proofs, generalizations and applications of certain identities concerning 
multiple Dirichlet series}}
\author{L\'aszl\'o T\'oth  \\
Department of Mathematics \\
University of P\'ecs \\
Ifj\'us\'ag \'utja 6, 7624 P\'ecs, Hungary \\
E-mail: {\tt ltoth@gamma.ttk.pte.hu}}
\date{}
\maketitle

\centerline{Journal of Integer Sequences, Vol. 26 (2023), Article 23.2.1}

\begin{abstract} We present other proofs, generalizations and analogues of the identities 
concerning multiple Dirichlet series by Tahmi and Derbal (2022). As applications, we
obtain asymptotic formulas with remainder terms for certain related sums. 
\end{abstract}

{\sl 2010 Mathematics Subject Classification}: 11A05, 11A25, 11N37

{\sl Key Words and Phrases}: relatively prime integers, pairwise relatively 
prime integers, arithmetic function of several variables, multiple Dirichlet series, multiplicative
function, completely multiplicative function, asymptotic formula

\section{Introduction}
Recently, Tahmi and Derbal \cite{TahDer2022} obtained certain identities for the
multiple Dirichlet series
\begin{equation*}
\sum_{\substack{n_1,\ldots,n_r=1\\ \gcd(n_1,\ldots,n_r)=1}}^{\infty} 
\frac{f(n_1)\cdots f(n_r)}{n_1^{s_1}\cdots n_r^{s_r}},
\end{equation*}
with $r\ge 2$ an integer, in the cases where $f:\N:=\{1,2,\ldots\} \to \C$ is a completely multiplicative arithmetic function or the
Dirichlet convolution of two completely multiplicative functions.

As direct corollaries of their results, they mentioned, among others, the identities  
\begin{equation*} 
\sum_{\substack{n_1,\ldots,n_r=1\\ \gcd(n_1,\ldots,n_r)=1}}^{\infty} 
\frac{1}{n_1^{s_1}\cdots n_r^{s_r}} = \frac{\zeta(s_1)\cdots \zeta(s_r)}{\zeta(s_1+\cdots+s_r)},
\end{equation*}
with $s_i\in \C$, $\Re s_i>1$ ($1\le i\le r$), concerning the Dirichlet series of the characteristic 
function of the set of points in $\N^r$, which are visible from the origin (also see Apostol \cite[p.\ 248]{Apo1976}), 
and 
\begin{equation} \label{tau_2}
\sum_{\substack{n_1,n_2=1\\ \gcd(n_1,n_2)=1}}^{\infty} 
\frac{\tau(n_1)\tau(n_2)}{n_1^{s_1}n_2^{s_2}} = \zeta^2(s_1)\zeta^2(s_2)\prod_p \Bigl( 1-\frac{4}{p^{s_1+s_2}} +\frac{2}{p^{2s_1+s_2}}+\frac{2}{p^{s_1+2s_2}}-\frac1{p^{2s_1+2s_2}} \Bigr),
\end{equation}
with $s_i\in \C$, $\Re s_i>1$ ($1\le i\le 2$), where $\tau(n)=\sum_{d\mid n} 1$ is the divisor function.

The proofs given in \cite{TahDer2022} are by using Euler product expansions of the Dirichlet series of 
some appropriate multiplicative functions of one variable.

In this paper we present other proofs and generalizations of the results by Tahmi and Derbal \cite{TahDer2022} by considering 
Euler product expansions of some multiple Dirichlet series
\begin{equation*}
\sum_{\substack{n_1,\ldots,n_r=1}}^{\infty} \frac{F(n_1,\ldots,n_r)}{n_1^{s_1}\cdots n_r^{s_r}}
\end{equation*}
of multiplicative arithmetic functions $F:\N^r\to \C$ of $r$ variables. Namely, we investigate the functions
\begin{equation} \label{F_rel_prime}
F(n_1,\ldots,n_r)= 
\begin{cases}
f_1(n_1)\cdots f_r(n_r), & \text{if $\gcd(n_1,\ldots,n_r)=1$;}\\
0, & \text{otherwise,}
\end{cases}
\end{equation}
where each of the functions $f_1,\ldots, f_r:\N \to \C$ is the Dirichlet convolution of $t$ ($t\ge 1$) completely multiplicative functions. 
Note that if $f_1,\ldots,f_r$ are multiplicative, then $F$ given by \eqref{F_rel_prime} is multiplicative, viewed as a function of $r$ variables.

We also make more explicit the formula of \cite[Th.\ 3.2]{TahDer2022}, as applications we
obtain asymptotic formulas with remainder terms for certain sums 
\begin{equation*}
\sum_{\substack{n_1,\ldots,n_r\le x \\ \gcd(n_1,\ldots,n_r)=1}} f_1(n_1)\cdots f_r(n_r),
\end{equation*}
and also derive similar results where the condition $\gcd(n_1,\ldots,n_r)=1$ is replaced by the condition that $n_1,\ldots,n_r$ are pairwise relatively prime.
Some basic properties of multiplicative arithmetic functions of $r$ variables are reviewed in Section \ref{Section_Prelim_1}. Certain polynomial identities needed in the proofs are given in Section \ref{Section_Prelim_2}.
Our main results and their proofs on the Dirichlet series are included in Section \ref{Section_Dirichlet_series}, and some
related asymptotic formulas are presented in Section \ref{Section_Asymptotic_formulas}.
All the identities regarding Dirichlet series and Euler
products are considered formally or in the case of absolute convergence.

\section{Preliminaries} 

\subsection{Arithmetic functions of several variables} \label{Section_Prelim_1}

Let $F:\N^r\to \C$ be an arbitrary arithmetic function of $r$ variables ($r\ge 1$). Its Dirichlet series is given by
\begin{equation*}
D(F,s_1,\ldots,s_r): = \sum_{n_1,\ldots,n_r=1}^{\infty}
\frac{F(n_1,\ldots,n_r)}{n_1^{s_1}\cdots n_r^{s_r}}.
\end{equation*}

The Dirichlet convolution of the functions $F,G:\N^r\to \C$ is defined by
\begin{equation*}
(F*G)(n_1,\ldots,n_r)= \sum_{d_1\mid n_1, \ldots, d_r\mid n_r}
F(d_1,\ldots,d_r) G(n_1/d_1, \ldots, n_r/d_r).
\end{equation*}

If $D(F,s_1,\ldots,s_r)$ and $D(G,s_1,\ldots,s_r)$, with $s_1,\ldots, s_r\in \C$, are
absolutely convergent, then
$D(F*G;s_1,\ldots,s_r)$ is also absolutely convergent and
\begin{equation*}
D(F*G, s_1,\ldots,s_r) = D(F,s_1,\ldots,s_r) D(G,s_1,\ldots,s_r).
\end{equation*}

A nonzero arithmetic function $F:\N^r\to \C$ is said to be {\it multiplicative\/} if
\begin{equation*}
F(m_1n_1,\ldots,m_rn_r)= F(m_1,\ldots,m_r) F(n_1,\ldots,n_r)
\end{equation*}
holds for every $m_1,\ldots,m_r,n_1,\ldots,n_r\in \N$ such that $\gcd(m_1\cdots m_r,n_1\cdots n_r)=1$.
If $F$ is multiplicative, then it is determined by the values
$F(p^{a_1},\ldots,p^{a_r})$, where $p$ is prime and
$a_1,\ldots,a_r\in \N_0:=\N \cup \{0\}$. More exactly, $F(1,\ldots,1)=1$ and
for every $n_1,\ldots,n_r\in \N$,
\begin{equation*}
F(n_1,\ldots,n_r)= \prod_p F\bigl(p^{a_p(n_1)}, \ldots,p^{a_p(n_r)}\bigr),
\end{equation*}
by using the notation $n=\prod_p p^{a_p(n)}$ for the prime power factorization of $n\in \N$, the product being over the primes $p$,
where all but a finite number of the exponents $a_p(n)$ are zero. 

Examples of multiplicative functions of $r$ variables are the GCD and LCM functions $\gcd(n_1,\ldots,n_r)$, $\lcm(n_1,\ldots,n_r)$
and the characteristic functions
\begin{align*}
\varrho(n_1,\ldots,n_r) &= 
\begin{cases}
1, & \text{if $\gcd(n_1,\ldots,n_r)=1$;}\\
0, & \text{otherwise,} 
\end{cases}\\
\vartheta(n_1,\ldots,n_r) &= 
\begin{cases}
1, & \text{if $\gcd(n_i,n_j)=1$ for every $1\le i< j\le r$;}\\
0, & \text{otherwise.}
\end{cases}
\end{align*}

If $F,G:\N^r \to \C$ are multiplicative, then their Dirichlet convolution $F*G$ is also multiplicative.
If $F$ is multiplicative, then its Dirichlet series can be expanded into a (formal)
Euler product, that is,
\begin{equation} \label{Euler_product} 
D(F,s_1,\ldots,s_r)=  \prod_p \sum_{a_1,\ldots,a_r=0}^{\infty}
\frac{f(p^{a_1},\ldots, p^{a_r})}{p^{a_1s_1+\cdots +a_r s_r}},
\end{equation}
the product being over the primes $p$. More exactly, if $F$ is multiplicative, then the series
$D(F,s_1,\ldots,s_r)$ with $s_1,\ldots,s_r\in\C$ is absolutely convergent if and only if
\begin{equation*}
\sum_p \sum_{\substack{a_1,\ldots,a_r=0\\ a_1+\cdots +a_r \ge 1}}^{\infty}
\frac{|f(p^{a_1},\ldots, p^{a_r})|}{p^{a_1 \Re s_1+\cdots +a_r \Re s_r}} < \infty
\end{equation*}
and in this case equality \eqref{Euler_product} holds. 

See, e.g., Delange \cite{Del1969} and the survey by the author \cite{Tot2014} for these and 
some related results on arithmetic functions of $r$ variables. If $r=1$, i.e., in the case of functions 
of a single variable we recover some familiar properties.

\subsection{Some polynomial identities} \label{Section_Prelim_2}

Let $e_j(x_1,\ldots,x_t)=\sum_{1\le i_1<\cdots <i_j\le t} x_{i_1}\cdots x_{i_j}$
denote the elementary symmetric polynomials in $x_1,\ldots, x_t$ of degree $j$ ($1\le j \le t$).
We will use the polynomial identity 
\begin{equation} \label{pol_id}
P(x): = \prod_{j=1}^t (x-x_j)= x^t+ \sum_{j=1}^t (-1)^j e_j(x_1,\ldots, x_t) x^{t-j}.
\end{equation}

Taking derivatives gives 
\begin{equation} \label{pol_id_der}
P'(x)= \sum_{j=1}^t \prod_{\substack{k=1\\ k\ne j}}^t (x-x_k) = t x^{t-1}+ \sum_{j=1}^{t-1} (-1)^j(t-j) e_j(x_1,\ldots, x_t) x^{t-j-1}.
\end{equation}

We need the following lemma.

\begin{lemma} \label{Lemma} If $t\in \N$ and $x_1,\ldots,x_t\in \C$, then
\begin{equation*}
(1-t)\prod_{j=1}^t (1-x_j) + \sum_{j=1}^t \prod_{\substack{k=1\\ k\ne j}}^t (1-x_k) = 1+ \sum_{j=2}^t (-1)^{j-1}(j-1)e_j(x_1,\ldots,x_t).    
\end{equation*}
\end{lemma}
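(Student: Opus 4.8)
The plan is to recognize the two products on the left-hand side as the values of $P$ and $P'$ at $x=1$, and then to substitute into the explicit expansions \eqref{pol_id} and \eqref{pol_id_der}. Indeed, setting $x=1$ in \eqref{pol_id} gives $\prod_{j=1}^t(1-x_j)=P(1)$, while setting $x=1$ in \eqref{pol_id_der} gives $\sum_{j=1}^t\prod_{\substack{k=1\\ k\ne j}}^t(1-x_k)=P'(1)$. Thus the left-hand side is exactly $(1-t)P(1)+P'(1)$, and it remains to expand this linear combination and collect the coefficients of the elementary symmetric polynomials.

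First I would record, from \eqref{pol_id} and \eqref{pol_id_der} evaluated at $x=1$ (abbreviating $e_j=e_j(x_1,\ldots,x_t)$),
\[
P(1)=1+\sum_{j=1}^t(-1)^j e_j, \qquad P'(1)=t+\sum_{j=1}^{t-1}(-1)^j(t-j)\,e_j.
\]
Substituting these into $(1-t)P(1)+P'(1)$, the constant terms combine as $(1-t)+t=1$, which already produces the leading $1$ on the right-hand side.

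Next I would merge the two remaining sums. For each index $1\le j\le t-1$, which appears in both sums, the coefficient of $e_j$ is $(-1)^j\bigl[(1-t)+(t-j)\bigr]=(-1)^j(1-j)=(-1)^{j-1}(j-1)$; in particular the $j=1$ term vanishes. The top index $j=t$ occurs only in $P(1)$, contributing $(1-t)(-1)^t e_t=(-1)^{t-1}(t-1)e_t$, which is precisely the $j=t$ term of the target sum. Assembling these gives $1+\sum_{j=2}^t(-1)^{j-1}(j-1)e_j$, as claimed.

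The computation is entirely elementary, so there is no genuine obstacle; the only point demanding care is the bookkeeping between the two sums, whose upper limits differ ($t$ versus $t-1$). This mismatch is exactly what makes the $e_t$ contribution come solely from $P(1)$, while the vanishing of the $e_1$ coefficient trims the lower end and shifts the summation to start at $j=2$. Beyond this index matching and the routine sign identity $(-1)^j(1-j)=(-1)^{j-1}(j-1)$, the argument is straightforward.
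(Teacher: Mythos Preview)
Your proof is correct and follows essentially the same approach as the paper's own proof: both recognize the left-hand side as $(1-t)P(1)+P'(1)$, substitute the expansions \eqref{pol_id} and \eqref{pol_id_der} at $x=1$, and simplify. Your write-up simply spells out in more detail the bookkeeping of combining coefficients (the $j=1$ vanishing and the $j=t$ term coming solely from $P(1)$) that the paper leaves implicit.
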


\begin{proof} By using \eqref{pol_id} and \eqref{pol_id_der},
\begin{equation*}
(1-t)\prod_{j=1}^t (1-x_j) + \sum_{j=1}^t \prod_{\substack{k=1\\ k\ne j}}^t (1-x_k) = (1-t)P(1)+P'(1)     
\end{equation*}
\begin{equation*}
= (1-t)\Bigl(1+ \sum_{j=1}^t (-1)^j e_j(x_1,\ldots, x_t)\Bigr)+ t + \sum_{j=1}^{t-1} (-1)^j (t-j) e_j(x_1,\ldots, x_t) 
\end{equation*}
\begin{equation*}
= 1+ \sum_{j=2}^t (-1)^{j-1}(j-1)e_j(x_1,\ldots,x_t).    
\end{equation*}
\end{proof}

\section{Identities for Dirichlet series} \label{Section_Dirichlet_series}

Our first result is the following. As above, let $*$ denote the Dirichlet convolution of arithmetic functions and let $D(f,s):=\sum_{n=1}^{\infty}
f(n)n^{-s}$ stand for the Dirichlet series of the function $f:\N\to \C$. We recall that a nonzero function $f:\N \to \C$ is {\it completely multiplicative\/} if
$f(mn)=f(m)f(n)$ holds for all $m,n\in \N$.

\begin{theorem} \label{Th_1}
Let $r\ge 2$, $t\ge 1$ be fixed integers and let $f_i= g_{i1}*\cdots *g_{it}$, where $g_{ij}:\N \to \C$ are nonzero completely 
multiplicative functions \textup{($1\le i\le r$, $1\le j\le t$)}.
Then we have (formally or in the case of absolute convergence),
\begin{equation*}
\sum_{\substack{n_1,\ldots,n_r=1\\ \gcd(n_1,\ldots,n_r)=1}}^{\infty} 
\frac{f_1(n_1)\cdots f_r(n_r)}{n_1^{s_1}\cdots n_r^{s_r}} = D(f_1,s_1)\cdots D(f_r,s_r) \Delta(f_1,\ldots,f_r,s_1,\ldots,s_r),
\end{equation*}
where
\begin{align}
& \Delta(f_1,\ldots,f_r,s_1,\ldots,s_r) =  \nonumber \\
& \quad\quad \prod_p \Bigl(1+ (-1)^{r-1} \sum_{1\le a_1,\ldots,a_r\le t} \frac1{p^{a_1s_1+\cdots +a_rs_r}} \prod_{i=1}^r (-1)^{a_i} G_{i a_i}(p)\Bigr),
\label{id_Th_1}
\end{align}
and 
\begin{equation} \label{G_i_j_p}
G_{ij}(p):= \sum_{1\le \ell_1<\cdots <\ell_j\le t} g_{i\ell_1}(p)\cdots g_{i\ell_j}(p),
\end{equation}
with $1\le i\le r$, $1\le j\le t$, and $p$ a prime.
\end{theorem}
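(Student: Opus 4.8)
The plan is to view the summand as the $r$-variable multiplicative function $F(n_1,\ldots,n_r)=\varrho(n_1,\ldots,n_r)\,f_1(n_1)\cdots f_r(n_r)$ and to compute its Euler product via \eqref{Euler_product}. Since each $f_i$ is multiplicative and $\varrho$ is multiplicative in $r$ variables, so is $F$, and the left-hand series is $D(F,s_1,\ldots,s_r)$. Writing $z_i:=p^{-s_i}$, I would first record the single-variable local factor of $D(f_i,s_i)$: because $f_i=g_{i1}*\cdots*g_{it}$ with each $g_{ij}$ completely multiplicative, one has $L_i:=\sum_{a\ge 0}f_i(p^a)z_i^{\,a}=\prod_{j=1}^t(1-g_{ij}(p)z_i)^{-1}$, each factor being the geometric series attached to a completely multiplicative function. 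Thus $\prod_p L_i=D(f_i,s_i)$.

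Next comes the key step, which is to localize the constraint $\gcd(n_1,\ldots,n_r)=1$: on prime powers it becomes $\min(a_1,\ldots,a_r)=0$, i.e.\ the condition that the exponents are \emph{not} all positive. I would therefore perform a local inclusion--exclusion, writing the $p$-factor of $D(F,s_1,\ldots,s_r)$ as the full unrestricted local sum minus the part in which every exponent is $\ge 1$. Because the summand factors across the index $i$, the full sum equals $\prod_{i=1}^r L_i$ and the all-positive part equals $\prod_{i=1}^r(L_i-1)$ (using $f_i(1)=1$). Hence the $p$-factor of the left-hand series is $\prod_{i=1}^r L_i-\prod_{i=1}^r(L_i-1)$.

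It then remains to separate off $D(f_1,s_1)\cdots D(f_r,s_r)$. Dividing the $p$-factor by $\prod_i L_i$ and taking the product over $p$ leaves the $p$-factor of $\Delta$ equal to $1-\prod_{i=1}^r\bigl(1-L_i^{-1}\bigr)$, where $L_i^{-1}=\prod_{j=1}^t(1-g_{ij}(p)z_i)$. Expanding this last product by the elementary-symmetric identity \eqref{pol_id} applied with $x_j=g_{ij}(p)z_i$, and recognizing the coefficients as the quantities \eqref{G_i_j_p}, gives $L_i^{-1}=\sum_{a=0}^t(-1)^a z_i^{\,a}G_{ia}(p)$ with $G_{i0}(p)=1$, so that $1-L_i^{-1}=-\sum_{a=1}^t(-1)^a z_i^{\,a}G_{ia}(p)$. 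Substituting this into $1-\prod_i(1-L_i^{-1})$, multiplying out the $r$ factors, and using $\prod_i z_i^{\,a_i}=p^{-(a_1s_1+\cdots+a_rs_r)}$ yields exactly the bracket in \eqref{id_Th_1}.

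The only genuine obstacle is the sign-and-index bookkeeping in this final expansion: tracking the factor $(-1)^{a_i}$ produced by each elementary-symmetric expansion, the global sign $(-1)^r$ coming from the $r$ factors $1-L_i^{-1}$, and verifying that their combination collapses to the stated $(-1)^{r-1}$ via $-(-1)^r=(-1)^{r-1}$. The remaining points are routine: the validity of the local inclusion--exclusion (an identity of formal power series in the $z_i$), and the transition between formal identities and absolutely convergent Euler products, which is guaranteed by the convergence criterion recorded just after \eqref{Euler_product}.
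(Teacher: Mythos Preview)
Your proposal is correct and follows essentially the same route as the paper's proof: both use the multiplicativity of $\varrho(n_1,\ldots,n_r)f_1(n_1)\cdots f_r(n_r)$ to pass to an Euler product, split each local factor as $\prod_i L_i-\prod_i(L_i-1)$ with $L_i=\prod_{j=1}^t(1-g_{ij}(p)p^{-s_i})^{-1}$, divide through by $\prod_i L_i$, and then expand $1-L_i^{-1}$ via the elementary symmetric polynomial identity \eqref{pol_id} to obtain \eqref{id_Th_1}. Your sign bookkeeping is also handled exactly as in the paper.
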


In the cases $t=1$ and $t=2$, with $f_1=\cdots =f_r=f$, Theorem \ref{Th_1} recovers \cite[Ths.\ 3.1, 3.2]{TahDer2022}. 

\begin{proof}
As mentioned above, the characteristic function $\varrho$ of the $r$-tuples with relatively prime components
is multiplicative, viewed as a functions of $r$ variables. Note that for primes $p$ and $a_1,\ldots,a_r \ge 0$ 
we have $\varrho(p^{a_1},\ldots,p^{a_r})=1$ if and only if there is at least one $a_i=0$. Also, 
if $f_1,\ldots,f_r:\N\to \C$ are arbitrary 
multiplicative functions of a single variable, then their product $f_1(n_1)\cdots f_r(n_r)$ is multiplicative 
as a function of $r$ variables. We deduce the Euler product expansion
\begin{equation*}
D:= \sum_{\substack{n_1,\ldots,n_r=1\\ \gcd(n_1,\ldots,n_r)=1}}^{\infty} 
\frac{f_1(n_1)\cdots f_r(n_r)}{n_1^{s_1}\cdots n_r^{s_r}} = \sum_{n_1,\ldots,n_r=1}^{\infty} 
\frac{f_1(n_1)\cdots f_r(n_r)\varrho(n_1,\ldots,n_r)}{n_1^{s_1}\cdots n_r^{s_r}}
\end{equation*}
\begin{equation*}
= \prod_p \sum_{a_1,\ldots,a_r=0}^{\infty} \frac{f_1(p^{a_1})\cdots f_r(p^{a_r})\varrho(p^{a_1},\ldots,p^{a_r})}{p^{a_1s_1+\cdots + a_rs_r}}    
\end{equation*}
\begin{equation*}
= \prod_p \sum_{\substack{a_1,\ldots,a_r=0\\ a_1\cdots a_r=0}}^{\infty} \frac{f_1(p^{a_1})\cdots f_r(p^{a_r})}{p^{a_1s_1+\cdots + a_rs_r}}    
\end{equation*}
\begin{equation*}
= \prod_p \biggl( \sum_{a_1,\ldots,a_r=0}^{\infty} - \sum_{a_1,\ldots,a_r=1}^{\infty}\biggr) 
\frac{f_1(p^{a_1})\cdots f_r(p^{a_r})}{p^{a_1s_1+\cdots + a_rs_r}}.    
\end{equation*}

Now if $f_i= g_{i1}*\cdots *g_{it}$, where $g_{ij}:\N \to \C$ are completely 
multiplicative functions ($1\le i\le r$, $1\le j\le t$), then  
\begin{equation*}
\sum_{n=1}^{\infty} \frac{f_i(n)}{n^s} = \prod_{j=1}^t \sum_{n=1}^{\infty} \frac{g_{ij}(n)}{n^s} 
=\prod_{j=1}^t \prod_p \Bigl(1-\frac{g_{ij}(p)}{p^s}\Bigr)^{-1} =\prod_p \prod_{j=1}^t  \Bigl(1-\frac{g_{ij}(p)}{p^s}\Bigr)^{-1}.
\end{equation*}

At the same time, since the functions $f_i$ ($1\le i\le r$) are multiplicative, we have
\begin{equation*}
\sum_{n=1}^{\infty} \frac{f_i(n)}{n^s} =  \prod_p \sum_{a=0}^{\infty} \frac{f_i(p^a)}{p^{as}},
\end{equation*}
showing that
\begin{equation*}
\sum_{a=0}^{\infty} \frac{f_i(p^a)}{p^{as}}= \prod_{j=1}^t  \Bigl(1-\frac{g_{ij}(p)}{p^s}\Bigr)^{-1},
\end{equation*}
and
\begin{equation} \label{sum_f_1}
\sum_{a=1}^{\infty} \frac{f_i(p^a)}{p^{as}}= \prod_{j=1}^t  \Bigl(1-\frac{g_{ij}(p)}{p^s}\Bigr)^{-1} -1.
\end{equation}

It follows that
\begin{align} \nonumber
D &= \prod_p \biggl(
\prod_{i=1}^r \prod_{j=1}^t  \Bigl(1-\frac{g_{ij}(p)}{p^{s_i}}\Bigr)^{-1} - 
\prod_{i=1}^r \biggl(\prod_{j=1}^t \Bigl(1-\frac{g_{ij}(p)}{p^{s_i}} \Bigr)^{-1} -1 \biggr)   
\biggr) \\      
 \label{D_form_one}
& = \prod_p \prod_{i=1}^r \prod_{j=1}^t  \Bigl(1-\frac{g_{ij}(p)}{p^{s_i}}\Bigr)^{-1} 
\prod_p \biggl(1- \prod_{i=1}^r \biggl(1- \prod_{j=1}^t \Bigl(1-\frac{g_{ij}(p)}{p^{s_i}} \Bigr) \biggr)   
\biggr).     
\end{align}

Using identity \eqref{pol_id} for $x=1$ and $x_j= g_{ij}(p) p^{-s_i}$ ($1\le j\le t$) we have
\begin{equation*}
1- \prod_{j=1}^t \Bigl(1-\frac{g_{ij}(p)}{p^{s_i}}\Bigr) = \sum_{j=1}^t \frac{(-1)^{j-1}}{p^{js_i}} \sum_{1\le \ell_1< \cdots< \ell_j\le t}
g_{i\ell_1}(p)\cdots g_{i\ell_j}(p),
\end{equation*}
and inserting into \eqref{D_form_one} we deduce
\begin{equation*}
D= D(f_1,s_1)\cdots D(f_r,s_r) 
\prod_p \biggl(1-\prod_{i=1}^r \sum_{j=1}^t \frac{(-1)^{j-1}}{p^{js_i}} G_{ij}(p) \biggr),
\end{equation*}
where $G_{ij}(p)$ is defined by \eqref{G_i_j_p}. Here the product over the primes $p$ is
\begin{equation*}
\prod_p \biggl(1- \biggl(\sum_{a_1=1}^t \frac{(-1)^{a_1-1}}{p^{a_1s_1}} G_{1a_1}(p)\biggr) \cdots 
\biggl( \sum_{a_r=1}^t \frac{(-1)^{a_r-1}}{p^{a_rs_r}} G_{ra_r}(p)\biggr)  \biggr)
\end{equation*}
\begin{equation*}
= \prod_p \biggl(1+ (-1)^{r-1} \sum_{1\le a_1,\ldots,a_r\le t} \frac1{p^{a_1s_1+\cdots +a_rs_r}} \prod_{i=1}^r (-1)^{a_i} G_{i a_i}(p)\biggr),
\end{equation*}
finishing the proof.
\end{proof}

\begin{remark} {\rm Identity \eqref{id_Th_1} shows that under the assumptions of Theorem \ref{Th_1} we have the convolutional identity
\begin{equation} \label{convo_id_1}
f_1(n_1)\cdots f_r(n_r)\varrho(n_1,\ldots,n_r) = \sum_{d_1e_1=n_1,\ldots, d_re_r=n_r} f_1(d_1)\cdots f_r(d_r)  
F_{f_1,\ldots,f_r}(e_1,\ldots,e_r),
\end{equation}
where $F_{f_1,\ldots,f_r}$ is the multiplicative function 
defined for prime powers $p^{a_1}, \ldots, p^{a_r}$ ($a_1, \ldots$, $a_r\ge 0$, 
not all zero) by
\begin{equation*}
F_{f_1,\ldots,f_r}(p^{a_1},\ldots,p^{a_r}) = \begin{cases} (-1)^{r-1} \prod_{i=1}^r (-1)^{a_i} G_{i a_i}(p), & \text{if $1\le a_1,\ldots, a_r\le t$;} 
\\ 0, & \text{ otherwise.} \end{cases}
\end{equation*}
}
\end{remark}

Let $\tau_t(n)= \sum_{d_1\cdots d_t=n} 1$ denote the Piltz divisor function of order $t$.

\begin{corollary} \label{Cor_tau_gen}
Let $r\ge 2$ and let $t_i\ge 2$ \textup{($1\le i\le r$)} be fixed integers. If $s_i\in \C$, $\Re s_i>1$ 
\textup{($1\le i\le r$)}, then 
\begin{equation*}
\sum_{\substack{n_1,\ldots,n_r=1\\ \gcd(n_1,\ldots,s_r)=1}}^{\infty} 
\frac{\tau_{t_1}(n_1)\cdots \tau_{t_r}(n_r)}{n_1^{s_1}\cdots n_r^{s_r}} = \zeta^{t_1}(s_1)\cdots \zeta^{t_r}(s_r) \Delta(\tau_{t_1},\ldots,\tau_{t_r},s_1,\ldots,s_r),
\end{equation*}
with
\begin{equation} \label{Delta_tau}
\Delta(\tau_{t_1},\ldots,\tau_{t_r},s_1,\ldots,s_r)= \prod_p \biggl(1+(-1)^{r-1} \sum_{1\le a_1,\ldots,a_r\le t} \frac1{p^{a_1s_1+\cdots +a_rs_r}} \prod_{i=1}^r (-1)^{a_i} 
\binom{t_i}{a_i}\biggr),
\end{equation}
where $\binom{t_i}{a_i}$ are binomial coefficients with the usual convention that $\binom{t_i}{a_i}=0$
for $a_i>t_i$ \textup{($1\le i\le r$)}.
\end{corollary}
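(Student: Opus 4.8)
The plan is to apply Theorem \ref{Th_1} with every one of the completely multiplicative functions taken to be the constant function $\1$, where $\1(n)=1$ for all $n$. First I would recall that the Piltz function factors as $\tau_{t_i}=\1*\cdots*\1$, the Dirichlet convolution of $t_i$ copies of $\1$; since $D(\1,s)=\zeta(s)$, this gives $D(\tau_{t_i},s_i)=\zeta^{t_i}(s_i)$, which already produces the leading factor $\zeta^{t_1}(s_1)\cdots\zeta^{t_r}(s_r)$ standing in front of $\Delta$ in the statement.

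Next I would evaluate the building blocks $G_{ij}(p)$ of \eqref{G_i_j_p}. Taking $g_{i1}=\cdots=g_{i,t_i}=\1$, every factor satisfies $g_{i\ell}(p)=1$, so the sum over $1\le \ell_1<\cdots<\ell_j\le t_i$ simply counts the $j$-element subsets of $\{1,\ldots,t_i\}$, giving $G_{ij}(p)=\binom{t_i}{j}$. With the stated convention $\binom{t_i}{a_i}=0$ for $a_i>t_i$, substituting $G_{ia_i}(p)=\binom{t_i}{a_i}$ into \eqref{id_Th_1} reproduces \eqref{Delta_tau} term by term, including the overall sign $(-1)^{r-1}$ and the factors $(-1)^{a_i}$.

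The one point that needs care is that Theorem \ref{Th_1} is stated with a single common number $t$ of convolution factors, whereas here the orders $t_i$ vary with $i$. I would handle this by setting $t=\max(t_1,\ldots,t_r)$ and observing that the proof of Theorem \ref{Th_1} goes through unchanged if the fixed $t$ is replaced by $t_i$ in the $i$-th factor throughout: the expansion \eqref{sum_f_1} and the polynomial identity \eqref{pol_id} are each applied to the $i$-th variable with exactly $t_i$ terms. Equivalently, since $\binom{t_i}{a_i}=0$ whenever $a_i>t_i$, every term in which some $a_i$ exceeds $t_i$ contributes $0$, so enlarging the range of each index to $1\le a_i\le t$ leaves the product unchanged. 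This reconciliation of the varying $t_i$ with the uniform hypothesis of Theorem \ref{Th_1} is the only genuine obstacle; everything else is direct substitution.

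Finally I would check the analytic hypotheses. Since $\tau_{t_i}(n)\ll_\varepsilon n^\varepsilon$, each one-variable series $D(\tau_{t_i},s_i)$ converges absolutely for $\Re s_i>1$, and hence the multiple series on the left converges absolutely in the region $\Re s_i>1$ ($1\le i\le r$). Thus the convergence criterion following \eqref{Euler_product} is satisfied, and the formal identity furnished by Theorem \ref{Th_1} holds as a genuine identity there, completing the proof.
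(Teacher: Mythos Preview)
Your proposal is correct and follows the same route as the paper: apply Theorem~\ref{Th_1} with the constant function $\1$, identify $D(\tau_{t_i},s_i)=\zeta^{t_i}(s_i)$, and observe that $G_{ij}(p)=\binom{t_i}{j}$.

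The only difference is in how the mismatch between the varying $t_i$ and the uniform $t$ in Theorem~\ref{Th_1} is resolved. You propose either to rerun the proof of Theorem~\ref{Th_1} with $t_i$ in place of $t$ for the $i$-th factor, or to invoke the convention $\binom{t_i}{a_i}=0$ for $a_i>t_i$ a posteriori. The paper instead applies Theorem~\ref{Th_1} \emph{as stated} by padding: set $t=\max_i t_i$ and take $g_{ij}(p)=1$ for $1\le j\le t_i$ and $g_{ij}(p)=0$ for $t_i+1\le j\le t$ (so the extra factors are the Dirichlet unit $\varepsilon$, which leaves $f_i=\tau_{t_i}$ unchanged). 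Then $G_{ij}(p)=\binom{t_i}{j}$ drops out directly from~\eqref{G_i_j_p}, with the vanishing for $j>t_i$ automatic rather than conventional. This is a cleaner way to stay within the hypotheses of Theorem~\ref{Th_1} without reopening its proof, but the content is the same as what you wrote.
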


\begin{proof} Choose $g_{ij}(p)=1$ for $1\le j\le t_i$ and $g_{ij}(p)=0$ for $t_i+1\le j\le t$ ($1\le i\le r$). Then $f_i(n)=\tau_{t_i}(n)$ ($1\le i\le r$),
and use that $G_{ij}(p)=\binom{t_i}{j}$  ($1\le i\le r$, $1\le j\le t$) for a prime $p$.
\end{proof}

\begin{corollary} Let $r\ge 2$. If $s_i\in \C$, $\Re s_i>1$ \textup{($1\le i\le r$)}, then 
\begin{equation*}
\sum_{\substack{n_1,\ldots,n_r=1\\ \gcd(n_1,\ldots,n_r)=1}}^{\infty} 
\frac{\tau(n_1)\cdots \tau(n_r)}{n_1^{s_1}\cdots n_r^{s_r}} = \zeta^2(s_1)\cdots \zeta^2(s_r) 
\end{equation*}
\begin{equation*}
\times \prod_p \biggl(1+(-1)^{r-1} \sum_{1\le a_1,\ldots,a_r\le 2} \frac{(-2)^{\# \{1\le i\le r:\, a_i=1\}}}{p^{a_1s_1+\cdots +a_rs_r}}\biggr).
\end{equation*}
\end{corollary}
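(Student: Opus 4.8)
The plan is to obtain the stated identity as the special case $t_1=\cdots=t_r=2$ of Corollary \ref{Cor_tau_gen}. Since $\tau_2=\tau$ is the ordinary divisor function and $\zeta^{t_i}(s_i)=\zeta^2(s_i)$, the prefactor $\zeta^2(s_1)\cdots\zeta^2(s_r)$ is immediate, and the region of absolute convergence $\RE s_i>1$ is inherited. It therefore remains only to rewrite the factor $\Delta(\tau,\ldots,\tau,s_1,\ldots,s_r)$ from \eqref{Delta_tau} in the claimed form.

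First I would set $t=2$ in \eqref{Delta_tau}, so that the inner sum runs over $1\le a_1,\ldots,a_r\le 2$ and every binomial coefficient is $\binom{2}{a_i}$; the vanishing convention $\binom{t_i}{a_i}=0$ for $a_i>t_i$ is never invoked here, because the summation range $a_i\le 2$ coincides with $t_i=2$. The only substantive step is the elementary evaluation of the single-index factor: for $a_i=1$ one has $(-1)^{1}\binom{2}{1}=-2$, while for $a_i=2$ one has $(-1)^{2}\binom{2}{2}=1$.

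Hence, for any tuple $(a_1,\ldots,a_r)\in\{1,2\}^r$, the product over $i$ collapses to a single power of $-2$, namely
\begin{equation*}
\prod_{i=1}^r (-1)^{a_i}\binom{2}{a_i} = (-2)^{\#\{1\le i\le r:\, a_i=1\}},
\end{equation*}
since each index with $a_i=1$ contributes a factor $-2$ and each index with $a_i=2$ contributes a factor $1$. Substituting this into \eqref{Delta_tau} turns the Euler factor into exactly the one displayed in the statement, which finishes the argument. I do not expect any genuine obstacle: the result is a direct specialization, and the entire content is the combinatorial simplification above that rewrites a product of signed binomials as one power of $-2$.
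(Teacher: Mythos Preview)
Your proposal is correct and follows exactly the paper's approach: the paper's proof consists of the single line ``Apply Corollary \ref{Cor_tau_gen} for $t_i=2$ ($1\le i\le r$),'' and you have carried out precisely this specialization, additionally making explicit the evaluation $\prod_{i=1}^r (-1)^{a_i}\binom{2}{a_i}=(-2)^{\#\{i:\,a_i=1\}}$ that the paper leaves implicit.
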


\begin{proof} Apply Corollary \ref{Cor_tau_gen} for $t_i=2$ ($1\le i\le r$).
\end{proof}

If $r=2$, then this recovers identity \eqref{tau_2} and for $r=3$ we have
\begin{align}
& \sum_{\substack{n_1,n_2,n_3=1\\ \gcd(n_1,n_2,n_3)=1}}^{\infty} 
\frac{\tau(n_1)\tau(n_2)\tau(n_3)}{n_1^{s_1}n_2^{s_2}n_3^{s_3}} = \zeta^2(s_1)\zeta^2(s_2)\zeta^2(s_3)  \nonumber\\
& \quad\quad
\times \prod_p \Bigl(1-\frac{8}{p^{s_1+s_2+s_3}}+ \frac{4}{p^{2s_1+s_2+s_3}} +\frac{4}{p^{s_1+2s_2+s_3}} +\frac{4}{p^{s_1+s_2+2s_3}} \Bigr.  \nonumber \\
& \quad\quad \Bigl. - \frac{2}{p^{2s_1+2s_2+s_3}} - \frac{2}{p^{2s_1+s_2+2s_3}} -\frac{2}{p^{s_1+2s_2+2s_3}} + \frac1{p^{2s_1+2s_2+2s_3}} \Bigr).
\label{prod_tau_prime_3}
\end{align}

Now we consider Dirichlet series with $\gcd(n_1,\ldots,n_r)=1$ replaced by the condition that $n_1,\ldots,n_r$ are pairwise relatively prime.

\begin{theorem} \label{Th_2}
Let $r\ge 2$, $t\ge 1$ be fixed integers and let $f_i= g_{i1}*\cdots *g_{it}$, where $g_{ij}:\N \to \C$ are nonzero completely 
multiplicative functions \textup{($1\le i\le r$, $1\le j\le t$)}. Then we have (formally or in the case of absolute convergence),
\begin{equation*}
\sum_{\substack{n_1,\ldots,n_r=1\\ \gcd(n_i,n_j)=1 \, (i\ne j)}}^{\infty} 
\frac{f_1(n_1)\cdots f_r(n_r)}{n_1^{s_1}\cdots n_r^{s_r}} = D(f_1,s_1)\cdots D(f_r,s_r) \overline{\Delta}(f_1,\ldots,f_r,s_1,\ldots,s_r),
\end{equation*}
where
\begin{align}
& \overline{\Delta}(f_1,\ldots,f_r,s_1,\ldots,s_r)
= \nonumber\\
& \quad\quad \prod_p \biggl(1- \sum_{i=2}^r (i-1) \sum_{1\le \ell_1<\cdots <\ell_i\le r} \sum_{a_{\ell_1},\ldots,a_{\ell_i}=1}^t 
\frac1{p^{a_{\ell_1}s_{\ell_1}+\cdots +a_{\ell_i}s_{\ell_i}}} \prod_{m=1}^i (-1)^{a_{\ell_m}} G_{i a_{\ell_m}}(p)\biggr),
\label{id_Th_2}
\end{align}
with $G_{ij}(p)$ \textup{($1\le i\le r$, $1\le j\le t$, $p$ prime)} 
defined by \eqref{G_i_j_p}.  
\end{theorem}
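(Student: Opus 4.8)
The plan is to run the argument of Theorem \ref{Th_1} almost verbatim, but with the characteristic function $\varrho$ replaced by the pairwise-coprimality function $\vartheta$, which is likewise multiplicative as a function of $r$ variables. The first step is the local observation that for a prime $p$ and exponents $a_1,\ldots,a_r\ge 0$ one has $\vartheta(p^{a_1},\ldots,p^{a_r})=1$ exactly when \emph{at most one} of the $a_i$ is nonzero. Writing the left-hand side as $\sum_{n_1,\ldots,n_r} f_1(n_1)\cdots f_r(n_r)\vartheta(n_1,\ldots,n_r)\, n_1^{-s_1}\cdots n_r^{-s_r}$ and expanding into an Euler product, the local factor at $p$ therefore collapses to
\[
L_p = 1 + \sum_{i=1}^r \sum_{a=1}^\infty \frac{f_i(p^a)}{p^{a s_i}},
\]
the leading $1$ coming from the all-zero exponent vector and each inner sum from the $r$ families of exponent vectors with exactly one positive entry.

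Next I would invoke \eqref{sum_f_1}, and apply \eqref{pol_id} at $x=1$ with $x_j=g_{ij}(p)p^{-s_i}$, to write $\sum_{a\ge 1} f_i(p^a)p^{-as_i}=(1-w_i)^{-1}-1$, where
\[
w_i := 1 - \prod_{j=1}^t \Bigl(1-\frac{g_{ij}(p)}{p^{s_i}}\Bigr) = \sum_{a=1}^t \frac{(-1)^{a-1}}{p^{a s_i}} G_{ia}(p).
\]
Then $L_p = (1-r) + \sum_{i=1}^r (1-w_i)^{-1}$. Since the local Euler factor of $D(f_i,s_i)$ is precisely $(1-w_i)^{-1}$, the quantity $\overline{\Delta}$ is the product over $p$ of $\overline{\Delta}_p := L_p \prod_{k=1}^r (1-w_k)$, and multiplying out gives
\[
\overline{\Delta}_p = (1-r)\prod_{k=1}^r (1-w_k) + \sum_{i=1}^r \prod_{\substack{k=1\\ k\ne i}}^r (1-w_k).
\]
This is exactly the left-hand side of Lemma \ref{Lemma} with $t$ replaced by $r$ and $x_j=w_j$, so the lemma yields at once $\overline{\Delta}_p = 1 + \sum_{j=2}^r (-1)^{j-1}(j-1)\, e_j(w_1,\ldots,w_r)$.

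The remaining step, which I expect to be the only real bookkeeping obstacle, is to expand each monomial $w_{\ell_1}\cdots w_{\ell_j}$ of $e_j(w_1,\ldots,w_r)=\sum_{1\le \ell_1<\cdots<\ell_j\le r} w_{\ell_1}\cdots w_{\ell_j}$ back in terms of the $G_{\ell a}(p)$. Each factor $w_{\ell_m}$ contributes a sum over $a_{\ell_m}\in\{1,\ldots,t\}$, and the product of signs is $\prod_{m=1}^j (-1)^{a_{\ell_m}-1} = (-1)^j \prod_{m=1}^j (-1)^{a_{\ell_m}}$; combined with the outer $(-1)^{j-1}$ this collapses to an overall factor $(-1)^{2j-1}=-1$. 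Thus the $j$-th term becomes $-(j-1)\sum_{1\le \ell_1<\cdots<\ell_j\le r}\sum_{a_{\ell_1},\ldots,a_{\ell_j}=1}^t p^{-(a_{\ell_1}s_{\ell_1}+\cdots+a_{\ell_j}s_{\ell_j})}\prod_{m=1}^j (-1)^{a_{\ell_m}} G_{\ell_m a_{\ell_m}}(p)$, and summing over $j$ (and relabeling $j$ as $i$) reproduces formula \eqref{id_Th_2} exactly, completing the proof.
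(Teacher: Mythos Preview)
Your argument is correct and matches the paper's proof essentially line by line: both compute the local factor $L_p=1+\sum_i\sum_{a\ge 1}f_i(p^a)p^{-as_i}$, factor out the local Euler factors $(1-w_i)^{-1}$ of $D(f_i,s_i)$ (your $w_i$ is exactly the paper's $y_i$), apply Lemma~\ref{Lemma} with $x_j=w_j$, and then expand the elementary symmetric polynomials in the $w_\ell$. One remark: your final expansion correctly produces $G_{\ell_m\, a_{\ell_m}}(p)$, and this is indeed what the computation gives---the index $G_{i\, a_{\ell_m}}(p)$ printed in \eqref{id_Th_2} (and in the last displayed lines of the paper's proof) is a typographical slip.
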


\begin{proof} The characteristic function $\vartheta$ of the $r$-tuples with pairwise relatively prime components
is multiplicative, viewed as a functions of $r$ variables. Note that for primes $p$ and $a_1,\ldots,a_r \ge 0$ 
we have $\vartheta(p^{a_1},\ldots,p^{a_r})=1$ if and only if there is at most one $a_i\ge 1$. If 
$f_1,\ldots,f_r:\N\to \C$ are arbitrary 
multiplicative functions of a single variable, then we have the Euler product expansion
\begin{equation*}
\overline{D}:= \sum_{\substack{n_1,\ldots,n_r=1\\ \gcd(n_i,n_j)=1\, (i\ne j)}}^{\infty} 
\frac{f_1(n_1)\cdots f_r(n_r)}{n_1^{s_1}\cdots n_r^{s_r}} = \sum_{n_1,\ldots,n_r=1}^{\infty} 
\frac{f_1(n_1)\cdots f_r(n_r)\vartheta(n_1,\ldots,n_r)}{n_1^{s_1}\cdots n_r^{s_r}}
\end{equation*}
\begin{equation*}
= \prod_p \sum_{a_1,\ldots,a_r=0}^{\infty} \frac{f_1(p^{a_1})\cdots f_r(p^{a_r})\vartheta(p^{a_1},\ldots,p^{a_r})}{p^{a_1s_1+\cdots + a_rs_r}}    
= \prod_p \biggl(1+ \sum_{i=1}^r \sum_{a_i=1}^{\infty} \frac{f_i(p^{a_i})}{p^{a_is_i}}\biggr)
\end{equation*}
\begin{equation*}
= \prod_p \biggl(1+ \sum_{i=1}^r \biggl(\prod_{j=1}^t \Bigl(1-\frac{g_{ij}(p)}{p^{s_i}}\Bigr)^{-1}-1\biggr)\biggr)
\end{equation*}

by using \eqref{sum_f_1}. We deduce that
\begin{equation*} 
\overline{D} = \prod_p \prod_{i=1}^r \prod_{j=1}^t  \Bigl(1-\frac{g_{ij}(p)}{p^{s_i}}\Bigr)^{-1} 
\prod_p \biggl((1-r) \prod_{i=1}^r \prod_{j=1}^t \Bigl(1-\frac{g_{ij}(p)}{p^{s_i}} \Bigr) + \sum_{i=1}^r \prod_{\substack{k=1\\ k\ne i}}^r  
\prod_{j=1}^t \Bigl(1-\frac{g_{kj}(p)}{p^{s_k}} \Bigr) \biggr)    
\end{equation*}
\begin{equation} \label{express} 
= \prod_{i=1}^r D(f_i,s_i) \prod_p K(p),     
\end{equation}
say. Let $x_{ij}= g_{ij}(p)p^{-s_i}$ ($1\le i\le r$, $1\le j \le t$). Then by \eqref{pol_id},
\begin{equation*} 
\prod_{j=1}^t \Bigl(1-\frac{g_{ij}(p)}{p^{s_i}} \Bigr) = \prod_{j=1}^t (1-x_{ij})= 1-\sum_{j=1}^t (-1)^{j-1} e_j(x_{i1},\ldots,x_{it})= 1-y_i,
\end{equation*}
with $1\le i\le r$, where 
\begin{equation*} 
y_i:= \sum_{j=1}^t (-1)^{j-1} e_j(x_{i1},\ldots,x_{it}) = \sum_{j=1}^t (-1)^{j-1} \sum_{1\le \ell_1<\cdots < \ell_j\le r} x_{i\ell_1}\cdots x_{i\ell_j}  
\end{equation*}
\begin{equation} \label{def_y_i}
= \sum_{j=1}^t \frac{(-1)^{j-1}}{p^{js_i}} \sum_{1\le \ell_1<\cdots < \ell_j\le r} g_{i\ell_1}(p)\cdots g_{i\ell_j}(p)  
= \sum_{j=1}^t \frac{(-1)^{j-1}}{p^{js_i}} G_{ij}(p).  
\end{equation}

Therefore, by applying Lemma \ref{Lemma} for $y_1,\ldots,y_r$ we obtain that  the expression $K(p)$ under the product $\prod_p$ in \eqref{express} is
\begin{equation*}
K(p)= (1-r)\prod_{i=1}^r (1-y_i) + \sum_{i=1}^r \prod_{\substack{k=1\\ k\ne i}}^r (1-y_k) = 1+ \sum_{i=2}^r (-1)^{i-1}(i-1)e_i(y_1,\ldots,y_r)    
\end{equation*}
\begin{equation*}
= 1+ \sum_{i=2}^r (-1)^{i-1}(i-1) \sum_{1\le \ell_1 <\cdots < \ell_i\le r} y_{\ell_1}\cdots y_{\ell_i}
\end{equation*}
\begin{equation*}
= 1+ \sum_{i=2}^r (-1)^{i-1} (i-1) \sum_{1\le \ell_1 <\cdots < \ell_i\le r} \biggl(\sum_{a_{\ell_1}=1}^t \frac{(-1)^{a_{\ell_1}-1}}{p^{a_{\ell_1}s_{\ell_1}}} G_{ia_{\ell_1}}(p)\biggr) \cdots 
\biggl( \sum_{a_{\ell_i}=1}^t \frac{(-1)^{a_{\ell_i}-1}}{p^{a_{\ell_i}s_{\ell_i}}} G_{ia_{\ell_i}}(p)\biggr)
\end{equation*}
\begin{equation*}
= 1- \sum_{i=2}^r (i-1) \sum_{1\le \ell_1<\cdots <\ell_i\le r} \sum_{a_{\ell_1},\ldots,a_{\ell_i}=1}^t 
\frac1{p^{a_{\ell_1}s_{\ell_1}+\cdots +a_{\ell_i}s_{\ell_i}}} \prod_{m=1}^i (-1)^{a_{\ell_m}} G_{i a_{\ell_m}}(p),
\end{equation*}
by \eqref{def_y_i}, ending the proof.
\end{proof}

\begin{remark} {\rm Identity \eqref{id_Th_2} shows that under the assumptions of Theorem \ref{Th_2} we have the convolutional identity
\begin{equation} \label{convo_id_2}
f_1(n_1)\cdots f_r(n_r)\vartheta(n_1,\ldots,n_r) = \sum_{d_1e_1=n_1,\ldots, d_re_r=n_r} f_1(d_1)\cdots f_r(d_r)  
\overline{F}_{f_1,\ldots,f_r}(e_1,\ldots,e_r),
\end{equation}
where $\overline{F}_{f_1,\ldots,f_r}$ is the multiplicative function defined for prime powers $p^{a_1},\ldots,p^{a_r}$ ($a_1,\ldots$, $a_r\ge 0$, not all zero) by
\begin{equation*}
\overline{F}_{f_1,\ldots,f_r}(p^{a_1},\ldots,p^{a_r}) = \begin{cases} (1-i) \prod_{m=1}^i (-1)^{a_{\ell_m}} G_{i a_{\ell_m}}(p), & \text{if 
there exists $2\le i\le r$ and there} \\ & \text{exist $1\le \ell_1,\ldots,\ell_i\le r$ such that} \\ & \text{$1\le a_{\ell_1},\ldots,a_{\ell_i}\le t$;} \\ 0, & 
\text{otherwise.} 
\end{cases}
\end{equation*}
}
\end{remark}

\begin{corollary} \label{Cor_tau_gen_2}
Let $r\ge 2$ and let $t_i\ge 2$ \textup{($1\le i\le r$)} be fixed integers. If $s_i\in \C$, $\Re s_i>1$ \textup{($1\le i\le r$)},
then
\begin{equation*}
\sum_{\substack{n_1,\ldots,n_r=1\\ \gcd(n_i,n_j)=1\, (i\ne j)}}^{\infty} 
\frac{\tau_{t_1}(n_1)\cdots \tau_{t_r}(n_r)}{n_1^{s_1}\cdots n_r^{s_r}} = \zeta^{t_1}(s_1)\cdots \zeta^{t_r}(s_r) 
\overline{\Delta}(\tau_{t_1},\ldots,\tau_{t_r},s_1,\ldots,s_r),
\end{equation*}
where
\begin{align} 
& \overline{\Delta}(\tau_{t_1},\ldots,\tau_{t_r},s_1,\ldots,s_r) 
=  \nonumber \\
& \quad\quad \prod_p \biggl(1- \sum_{i=2}^r (i-1) \sum_{1\le \ell_1<\cdots <\ell_i\le r} \sum_{a_{\ell_1},\ldots,a_{\ell_i}=1}^t 
\frac1{p^{a_{\ell_1}s_{\ell_1}+\cdots +a_{\ell_i}s_{\ell_i}}} \prod_{m=1}^i (-1)^{a_{\ell_m}} \binom{t_i}{a_{\ell_m}}\biggr).
\label{overline_Delta_tau}
\end{align}
\end{corollary}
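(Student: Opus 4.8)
The plan is to obtain this statement by the same specialization of Theorem~\ref{Th_2} that produced Corollary~\ref{Cor_tau_gen} out of Theorem~\ref{Th_1}: the present corollary is precisely the pairwise-coprime counterpart of Corollary~\ref{Cor_tau_gen}, and the ambient identity \eqref{id_Th_2} already records every local factor we need, so no fresh analysis is required beyond choosing the right completely multiplicative building blocks and evaluating the quantities $G_{ij}(p)$.

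First I would record that the Piltz function $\tau_{t_i}$ is the $t_i$-fold Dirichlet convolution $\1 * \cdots * \1$ of the constant function $\1$, with $D(\1,s)=\zeta(s)$. To fit the single fixed parameter $t$ of Theorem~\ref{Th_2}, I take any $t \ge \max_{1\le i\le r} t_i$ and, for each $i$, set $g_{i1}=\cdots=g_{i t_i}=\1$ while letting $g_{i,t_i+1},\ldots,g_{it}$ be the Dirichlet-convolution identity $\varepsilon$, the nonzero completely multiplicative function with $\varepsilon(1)=1$ and $\varepsilon(p)=0$ for all primes $p$; equivalently, this is the choice $g_{ij}(p)=1$ for $1\le j\le t_i$ and $g_{ij}(p)=0$ for $t_i<j\le t$ used in the proof of Corollary~\ref{Cor_tau_gen}. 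With these choices $f_i=\1^{*t_i}=\tau_{t_i}$, so $D(f_i,s_i)=\zeta^{t_i}(s_i)$, and the padding factors contribute trivially since $(1-\varepsilon(p)p^{-s_i})^{-1}=1$.

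Next I would evaluate the $G_{ij}(p)$ from \eqref{G_i_j_p}. For the chosen $g_{ij}$ the list of values $g_{i1}(p),\ldots,g_{it}(p)$ consists of $t_i$ ones followed by $t-t_i$ zeros, so $G_{ij}(p)$, being the $j$-th elementary symmetric polynomial of these numbers, counts the $j$-subsets drawn entirely from the $t_i$ ones; that is, $G_{ij}(p)=\binom{t_i}{j}$, with the usual convention $\binom{t_i}{j}=0$ for $j>t_i$. Substituting $G_{i\,a_{\ell_m}}(p)=\binom{t_i}{a_{\ell_m}}$ into \eqref{id_Th_2} turns $\overline{\Delta}(f_1,\ldots,f_r,s_1,\ldots,s_r)$ into the displayed $\overline{\Delta}(\tau_{t_1},\ldots,\tau_{t_r},s_1,\ldots,s_r)$ of \eqref{overline_Delta_tau}, which is the claimed identity.

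Finally, the hypothesis $\Re s_i>1$ guarantees that each $D(\tau_{t_i},s_i)=\zeta^{t_i}(s_i)$ converges absolutely, so the whole computation is valid in the absolute-convergence sense required by Theorem~\ref{Th_2}. I do not expect any genuine obstacle: the only point needing a word of care is the mismatch between the single degree $t$ of the theorem and the varying orders $t_i$, and this is absorbed cleanly by the identity-function padding together with the convention $\binom{t_i}{j}=0$ for $j>t_i$. Everything else is a verbatim transcription of the argument already used for Corollary~\ref{Cor_tau_gen}, now read off from Theorem~\ref{Th_2} in place of Theorem~\ref{Th_1}.
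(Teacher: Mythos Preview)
Your proposal is correct and follows exactly the paper's own approach: the paper's proof is the one-line instruction to apply Theorem~\ref{Th_2} with $g_{ij}(p)=1$ for $1\le j\le t_i$ and $g_{ij}(p)=0$ for $t_i+1\le j\le t$, which is precisely your padding-by-$\varepsilon$ choice, and your evaluation $G_{ij}(p)=\binom{t_i}{j}$ is the same computation carried out explicitly. You have simply written out in detail what the paper leaves implicit.
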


\begin{proof} Apply Theorem \ref{Th_2} in the case $g_{ij}(p)=1$ for $1\le j\le t_i$ and $g_{ij}(p)=0$ for $t_i+1\le j\le t$ ($1\le i\le r$). 
\end{proof}

\begin{corollary} 
Let $r\ge 2$. If $s_i\in \C$, $\Re s_i>1$ \textup{($1\le i\le r$)}, then 
\begin{align*}
& \sum_{\substack{n_1,\ldots,n_r=1\\ \gcd(n_i,n_j)=1\, (i\ne j)}}^{\infty} 
\frac{\tau(n_1)\cdots \tau(n_r)}{n_1^{s_1}\cdots n_r^{s_r}} = \zeta^2(s_1)\cdots \zeta^2(s_r)  \\
& \quad\quad \times \prod_p \biggl(1- \sum_{i=2}^r (i-1) \sum_{1\le \ell_1<\cdots <\ell_i\le r} \sum_{1\le a_{\ell_1},\ldots,a_{\ell_i}\le 2} 
\frac{(-2)^{\# \{1\le m\le i:\, a_{\ell_m}=1 \}}}{p^{a_{\ell_1}s_{\ell_1}+\cdots +a_{\ell_i}s_{\ell_i}}} \biggr).
\end{align*}
\end{corollary}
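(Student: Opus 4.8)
The plan is to specialize Corollary~\ref{Cor_tau_gen_2} to the uniform choice $t_i=2$ for every $1\le i\le r$, which is legitimate because $\tau=\tau_2$. With this choice one may take the common upper bound $t=2$, so each inner summation index $a_{\ell_m}$ runs over $\{1,2\}$, matching the range $1\le a_{\ell_m}\le 2$ in the statement. Moreover $\zeta^{t_i}(s_i)=\zeta^2(s_i)$, so the prefactor in Corollary~\ref{Cor_tau_gen_2} becomes $\zeta^2(s_1)\cdots\zeta^2(s_r)$, exactly as required. It then remains only to rewrite the product $\prod_{m=1}^i (-1)^{a_{\ell_m}}\binom{t_i}{a_{\ell_m}}$ that appears in~\eqref{overline_Delta_tau}.

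The single computational step is to evaluate this product factor by factor. Since $\binom{2}{1}=2$ and $\binom{2}{2}=1$, the $m$-th factor equals $(-1)^1\cdot 2=-2$ when $a_{\ell_m}=1$ and equals $(-1)^2\cdot 1=+1$ when $a_{\ell_m}=2$. Multiplying over $m$, the indices with $a_{\ell_m}=2$ contribute only harmless factors of $1$, so the whole product collapses to $(-2)^N$, where $N=\#\{1\le m\le i:\, a_{\ell_m}=1\}$. Inserting this into~\eqref{overline_Delta_tau} produces precisely the exponent $(-2)^{\#\{1\le m\le i:\, a_{\ell_m}=1\}}$ displayed in the statement.

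I expect no genuine obstacle here: the result is a clean specialization of Corollary~\ref{Cor_tau_gen_2}, and the only content beyond invoking that corollary is the elementary identity $(-1)^a\binom{2}{a}\in\{-2,+1\}$ for $a\in\{1,2\}$, together with the observation that these one-variable factors multiply independently across the index $m$. I would therefore record these two remarks and conclude by a direct appeal to Corollary~\ref{Cor_tau_gen_2}.
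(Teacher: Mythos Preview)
Your proposal is correct and follows exactly the paper's own approach: the paper's proof is the single line ``Apply Corollary~\ref{Cor_tau_gen_2} for $t_i=2$ ($1\le i\le r$),'' and you carry out precisely this specialization, spelling out the elementary simplification $(-1)^{a}\binom{2}{a}\in\{-2,1\}$ that turns the product over $m$ into $(-2)^{\#\{m:\,a_{\ell_m}=1\}}$.
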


\begin{proof} Apply Corollary \ref{Cor_tau_gen_2} for $t_i=2$ ($1\le i\le r$).
\end{proof}

For $r=2$ this gives \eqref{tau_2} and for $r=3$ we have
\begin{align} 
& \sum_{\substack{n_1,n_2,n_3=1\\ \gcd(n_1,n_2)=\gcd(n_1,n_3)=\gcd(n_2,n_3)=1}}^{\infty} 
\frac{\tau(n_1)\tau(n_2)\tau(n_3)}{n_1^{s_1}n_2^{n_2}n_3^{s_3}} = \nonumber \\
& \quad\quad \zeta^2(s_1)\zeta^2(s_2)\zeta^2(s_3) \times \prod_p \Bigl(1-\frac{4}{p^{s_1+s_2}} + \frac{2}{p^{2s_1+s_2}} +\frac{2}{p^{s_1+2s_2}} -\frac1{p^{2s_1+2s_2}} 
-\frac{4}{p^{s_1+s_3}} + \frac{2}{p^{2s_1+s_3}}  \Bigr. \nonumber \\
& \quad\quad\quad \Bigl. +\frac{2}{p^{s_1+2s_3}} -\frac1{p^{2s_1+2s_3}} -\frac{4}{p^{s_2+s_3}} + \frac{2}{p^{2s_2+s_3}} +\frac{2}{p^{s_2+2s_3}} -\frac1{p^{2s_2+2s_3}}  +\frac{16}{p^{s_1+s_2+s_3}} - \frac{8}{p^{2s_1+s_2+s_3}}
  \Bigr.  \nonumber \\
& \quad\quad\quad \Bigl. -\frac{8}{p^{s_1+2s_2+s_3}} -\frac{8}{p^{s_1+s_2+2s_3}} 
 +\frac{4}{p^{2s_1+2s_2+s_3}}  \Bigr. 
\Bigl. + \frac{4}{p^{2s_1+s_2+2s_3}} +\frac{4}{p^{s_1+2s_2+2s_3}} -\frac{2}{p^{2s_1+2s_2+2s_3}} 
\Bigr).
\label{prod_tau_pairw_prime_3}
\end{align}

If we compare the infinite product \eqref{prod_tau_pairw_prime_3} to \eqref{prod_tau_prime_3}, then we can see that in
\eqref{prod_tau_prime_3} we only have exponents of $p$ of form $a_1s_1+a_2s_2+a_3s_3$ with $1\le a_1,a_2,a_3\le 2$, while 
in \eqref{prod_tau_pairw_prime_3} the exponents of $p$ are $a_1s_1+a_2s_2+a_3s_3$ with $0\le a_1,a_2,a_3\le 2$ and with at least
two nonzero values $a_1,a_2,a_3$. Similar in the general case, according to Theorems \ref{Th_1} and \ref{Th_2}.

It is possible to derive a common generalization of Theorems \ref{Th_1} and \ref{Th_2} by considering $k$-wise relatively
prime integers. Let $r\ge k\ge 2$ be fixed integers. The positive integers
$n_1,\ldots,n_r$ are called $k$-wise relatively prime if any $k$ of
them are relatively prime, that is, $\gcd(n_{i_1},\ldots,n_{i_k})=1$
for every $1\le i_1<\cdots<i_k\le r$. In particular, in the case
$k=2$ the integers are pairwise relatively prime and for $k=r$ they
are mutually relatively prime.  Let $\varrho_k$
denote the characteristic function of the set of $r$-tuples of
positive integers with $k$-wise relatively prime components.
Hence, $\varrho_r=\varrho$ and $\varrho_2=\vartheta$, with
our previous notation.

Here we confine ourselves to the case $t=1$, that is, the functions $f_1,\ldots,f_r$ are completely
multiplicative.

\begin{theorem} \label{Th_Dir_ser} Let $r\ge k\ge 2$ and let $f_1,\ldots,f_r:\N \to \C$ be completely
multiplicative functions. Then
\begin{align*}
& \sum_{n_1,\ldots,n_r=1}^{\infty}
\frac{f_1(n_1)\cdots f_r(n_r) \varrho_k(n_1,\ldots,n_r)}{n_1^{s_1}\cdots n_r^{s_r}}=
D(f_1,s_1)\cdots D(f_r,s_r)  \\
& \quad\quad \times \prod_p \biggl( 1- \sum_{i=k}^r (-1)^{i-k}
\binom{i-1}{k-1} \sum_{1\le \ell_1<\cdots < \ell_i\le r} \frac{f_{\ell_1}(p)\cdots f_{\ell_i}(p)}{p^{s_{\ell_1}+\cdots +s_{\ell_i}}} \biggr),
\end{align*}
\end{theorem}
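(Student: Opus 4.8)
The plan is to follow the same Euler-product strategy as in the proofs of Theorems \ref{Th_1} and \ref{Th_2}, reducing everything to a single combinatorial identity on the local factors. First I would record that $\varrho_k$ is multiplicative as a function of $r$ variables, and that for a prime $p$ and exponents $a_1,\ldots,a_r\ge 0$ one has $\varrho_k(p^{a_1},\ldots,p^{a_r})=1$ precisely when at most $k-1$ of the $a_i$ are positive (a prime dividing $k$ of the $n_i$ is exactly what violates $k$-wise coprimality). Writing $x_i:=f_i(p)p^{-s_i}$ and using complete multiplicativity, $f_i(p^{a_i})p^{-a_is_i}=x_i^{a_i}$, so the Euler product expansion \eqref{Euler_product} gives the local factor
\[
L_p=\sum_{\substack{S\subseteq\{1,\ldots,r\}\\ |S|\le k-1}}\ \prod_{i\in S}\frac{x_i}{1-x_i},
\]
obtained by grouping the admissible tuples according to the support $S=\{i:a_i\ge 1\}$ and summing the geometric series $\sum_{a\ge 1}x_i^a=x_i/(1-x_i)$ over each coordinate of $S$.

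Since the local factor of $D(f_1,s_1)\cdots D(f_r,s_r)$ is $\prod_{i=1}^r(1-x_i)^{-1}$, it remains to identify the quotient $M_p:=L_p\prod_{i=1}^r(1-x_i)$ with the bracketed factor in the statement. Clearing denominators turns each term into $\prod_{i\in S}x_i\prod_{i\notin S}(1-x_i)$, and because $\sum_{S\subseteq\{1,\ldots,r\}}\prod_{i\in S}x_i\prod_{i\notin S}(1-x_i)=\prod_{i=1}^r\bigl(x_i+(1-x_i)\bigr)=1$, I can rewrite
\[
M_p=1-\sum_{\substack{S\subseteq\{1,\ldots,r\}\\ |S|\ge k}}\ \prod_{i\in S}x_i\prod_{i\notin S}(1-x_i).
\]
Expanding $\prod_{i\notin S}(1-x_i)=\sum_{T\subseteq S^c}(-1)^{|T|}\prod_{j\in T}x_j$ and collecting terms by the squarefree monomial $\prod_{i\in U}x_i$ with $U=S\sqcup T$ of size $n$, each $n$-set $U$ arises from $\binom{n}{m}$ choices of an $m$-subset $S$ with sign $(-1)^{n-m}$; hence the coefficient of the elementary symmetric polynomial $e_n(x_1,\ldots,x_r)$ becomes $\sum_{m=k}^n(-1)^{n-m}\binom{n}{m}$.

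The crux is then the binomial evaluation
\[
\sum_{m=k}^n(-1)^{n-m}\binom{n}{m}=\sum_{j=0}^{n-k}(-1)^j\binom{n}{j}=(-1)^{n-k}\binom{n-1}{k-1},
\]
using the standard partial-sum identity $\sum_{j=0}^p(-1)^j\binom{n}{j}=(-1)^p\binom{n-1}{p}$ together with $\binom{n-1}{n-k}=\binom{n-1}{k-1}$. Substituting this coefficient back gives
\[
M_p=1-\sum_{i=k}^r(-1)^{i-k}\binom{i-1}{k-1}\,e_i(x_1,\ldots,x_r),
\]
which is exactly the local factor in the theorem once $e_i(x_1,\ldots,x_r)=\sum_{1\le\ell_1<\cdots<\ell_i\le r}x_{\ell_1}\cdots x_{\ell_i}$ is written in terms of $f_\ell(p)p^{-s_\ell}$. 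I expect the combinatorial bookkeeping of the second paragraph—interchanging the sum over $S$ with the expansion of $\prod_{i\notin S}(1-x_i)$ so as to isolate the coefficient of each $e_n$—to be the main obstacle, while the binomial identity itself is routine. As sanity checks, $k=r$ collapses the sum to the single term $1-x_1\cdots x_r$, recovering the $t=1$ case of Theorem \ref{Th_1}, and $k=2$ yields $1+\sum_{i=2}^r(-1)^{i-1}(i-1)e_i$, matching the $t=1$ case of Theorem \ref{Th_2}.
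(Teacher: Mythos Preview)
Your argument is correct and follows exactly the Euler-product strategy the paper indicates: recognize that $\varrho_k$ is multiplicative with $\varrho_k(p^{a_1},\ldots,p^{a_r})=1$ iff at most $k-1$ of the $a_i$ are positive, compute the local factor, and divide by $\prod_i(1-x_i)^{-1}$. The paper itself gives no details beyond that sketch, deferring to the proofs of Theorems~\ref{Th_1} and~\ref{Th_2} and to \cite[Th.~2.1]{Tot2016} for the case $f_i\equiv 1$; your write-up in fact supplies the missing combinatorics.

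The only stylistic difference is in how the quotient $M_p$ is simplified. The paper's treatment of the extreme cases $k=r$ and $k=2$ (Theorems~\ref{Th_1} and~\ref{Th_2}) proceeds via ad hoc manipulations---in particular the derivative identity of Lemma~\ref{Lemma} for $k=2$---whereas you give a uniform inclusion--exclusion argument: complete the sum over all supports $S$ to $1$, expand $\prod_{i\notin S}(1-x_i)$, regroup by the monomial support $U$, and evaluate the resulting alternating partial binomial sum as $(-1)^{n-k}\binom{n-1}{k-1}$. This is cleaner for general $k$ and is presumably what \cite{Tot2016} does as well. Your sanity checks at $k=r$ and $k=2$ correctly recover the $t=1$ specializations of Theorems~\ref{Th_1} and~\ref{Th_2}.
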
 

\begin{proof} For fixed $k$ the function $\varrho_k$ is multiplicative. Also, for prime powers $p^{a_1},\ldots$, $p^{a_r}$ 
($a_1\,\ldots,a_r\ge 0$) we have $\varrho_k(p^{a_1},\ldots, p^{a_r})=1$ if and only there are at most $k-1$ values $a_i\ge 1$.  
Now the proof is similar to the proofs of Theorems \ref{Th_1} and \ref{Th_2}. In the case $f_1(n)=\cdots =f_r(n)=1$ ($n\in \N$) this 
result and its detailed proof are given in \cite[Th.\ 2.1]{Tot2016}.
\end{proof}

\section{Related asymptotic formulas} \label{Section_Asymptotic_formulas}

The above identities can be used to obtain asymptotic formulas with remainder terms for certain related sums.
As examples, we point out the following formulas.

\begin{theorem} \label{Th_asympt_1} Let $r\ge 2$ and let $t_i\ge 2$ \textup{($1\le i\le r$)} be fixed integers.
Then for every $\varepsilon>0$,
\begin{equation*}
\sum_{\substack{n_1,\ldots,n_r\le x \\ \gcd(n_1,\ldots,n_r)=1}} \tau_{t_1}(n_1) \cdots \tau_{t_r}(n_r) = x^r Q(\log x)+ O(x^{r-1+\max_{1\le i\le r} \vartheta_{t_i} +\varepsilon}),
\end{equation*}
where $Q(u)$ is a polynomial in $u$ of degree $t_1+\cdots+t_r-r$ having the leading coefficient 
\begin{equation*}
\frac{\Delta(\tau_{t_1},\ldots,\tau_{t_r},1,\ldots,1)}{(t_1-1)!\cdots (t_r-1)!},
\end{equation*}
where $\Delta(\tau_{t_1},\ldots,\tau_{t_r},1,\ldots,1)$ is obtained from \eqref{Delta_tau}
for $s_1=\cdots=s_r=1$, and $\vartheta_{t_i}$ are the exponents in the Piltz divisor problems for $\tau_{t_i}$, namely 
\begin{equation} \label{Piltz}
\sum_{n\le x}  \tau_{t_i}(n) = x P_{t_i}(\log x)+ O(x^{\vartheta_{t_i} +\varepsilon}),
\end{equation}
with some polynomials $P_{t_i}(u)$ in $u$ of degree $t_i-1$ having the leading coefficients $1/(t_i-1)!$ \textup{($1\le i \le r$)}. 
\end{theorem}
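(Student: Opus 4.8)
The plan is to combine the convolutional identity \eqref{convo_id_1} with the classical Piltz estimate \eqref{Piltz}. Specializing Corollary \ref{Cor_tau_gen} (equivalently, the Remark following Theorem \ref{Th_1}) to $f_i=\tau_{t_i}$, I write $\tau_{t_1}(n_1)\cdots\tau_{t_r}(n_r)\varrho(n_1,\ldots,n_r)$ as the Dirichlet convolution of $\tau_{t_1}(d_1)\cdots\tau_{t_r}(d_r)$ with the multiplicative function $F:=F_{\tau_{t_1},\ldots,\tau_{t_r}}$ of $r$ variables. Writing $n_i=d_ie_i$ and setting $T_{t_i}(y):=\sum_{d\le y}\tau_{t_i}(d)$, this turns the target sum into
\[
S(x)=\sum_{e_1,\ldots,e_r} F(e_1,\ldots,e_r)\prod_{i=1}^r T_{t_i}(x/e_i),
\]
where only the terms with all $e_i\le x$ survive, since $T_{t_i}(y)=0$ for $y<1$. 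The decisive structural fact is the support of $F$: by the Remark, $F(p^{a_1},\ldots,p^{a_r})$ vanishes unless $1\le a_1,\ldots,a_r\le t$, so $F(e_1,\ldots,e_r)\ne 0$ forces every prime dividing some $e_i$ to divide all of them. Consequently the Dirichlet series $D(F,s_1,\ldots,s_r)=\Delta(\tau_{t_1},\ldots,\tau_{t_r},s_1,\ldots,s_r)$ is absolutely convergent in the enlarged region $\RE(s_1+\cdots+s_r)>1$ (its slowest-decaying Euler term is $p^{-(s_1+\cdots+s_r)}$), as are all of its partial derivatives in the $s_i$; at $s_1=\cdots=s_r=1$ this leaves a comfortable margin because $r\ge 2$.

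Next I insert \eqref{Piltz}, writing $T_{t_i}(x/e_i)=(x/e_i)P_{t_i}(\log(x/e_i))+E_i$ with $E_i\ll(x/e_i)^{\vartheta_{t_i}+\varepsilon}$, and multiply out the product over $i$. Collecting the pure main-term part and extending the $e$-summation to infinity yields $x^rQ(\log x)$, where
\[
Q(u)=\sum_{e_1,\ldots,e_r=1}^\infty \frac{F(e_1,\ldots,e_r)}{e_1\cdots e_r}\prod_{i=1}^r P_{t_i}(u-\log e_i).
\]
Expanding $\prod_i P_{t_i}(u-\log e_i)$ in powers of $u$, each coefficient is a convergent $\log$-weighted sum of $F(e)/(e_1\cdots e_r)$ — a partial derivative of $\Delta$ at $s_i=1$ — so $Q$ is a genuine polynomial. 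Its degree is $\sum_i(t_i-1)=t_1+\cdots+t_r-r$, and its leading coefficient comes from the top term $\prod_i(\log x)^{t_i-1}/(t_i-1)!$ multiplied by $\sum_e F(e)/(e_1\cdots e_r)=\Delta(\tau_{t_1},\ldots,\tau_{t_r},1,\ldots,1)$, which is exactly the claimed value by \eqref{Delta_tau} at $s_i=1$.

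It remains to bound the error, which is where the main work lies. The tail discarded when extending the main term to infinity is controlled by shifting a single variable: if some $e_{i_0}>x$, then the absolute convergence of $\Delta$ for $\RE\sum_i s_i>1$ gives room to gain a factor $x^{-\sigma}$ with $\sigma$ arbitrarily close to $r-1$, making this tail $\ll x^{1+\varepsilon}$. For the genuine error terms, indexed by nonempty $S\subseteq\{1,\ldots,r\}$ in which $E_i$ is taken for $i\in S$ and the main term for $i\notin S$, the contribution is $\ll x^{\sum_{i\in S}\vartheta_{t_i}+(r-|S|)+\varepsilon}$ times a weighted $|F|$-sum with abscissa $\sum_{i\in S}\vartheta_{t_i}+(r-|S|)$. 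When $S=\{i_0\}$ is a singleton this abscissa equals $\vartheta_{t_{i_0}}+(r-1)>1$, the sum converges (the $\log$-factors absorbed by a slight shift), and the bound is $\ll x^{r-1+\vartheta_{t_{i_0}}+\varepsilon}$; choosing $i_0$ with $\vartheta_{t_{i_0}}=\max_i\vartheta_{t_i}$ produces the stated remainder, which is genuinely below the main term since $\vartheta_{t_{i_0}}<1$.

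For $|S|\ge 2$ the exponent of $x$ only decreases, each $\vartheta_{t_i}<1$ replacing a contribution of exponent $1$; the lone borderline case is $S=\{1,\ldots,r\}$, where the weighted $|F|$-sum may diverge when $\sum_i\vartheta_{t_i}<1$. There I use that $F$ has abscissa $1$ in the single variable $e_1\cdots e_r$, so the partial sum over $e_i\le x$ is $\ll x^{1-\sum_i\vartheta_{t_i}+\varepsilon}$ by partial summation, leaving a total $\ll x^{1+\varepsilon}$, again absorbed into $O(x^{r-1+\max_i\vartheta_{t_i}+\varepsilon})$. I expect this final bookkeeping — verifying that every weighted $|F|$-sum converges, or in the single borderline case grows slowly enough, by systematically exploiting the restricted support of $F$ (equivalently, the enlarged abscissa of $\Delta$) — to be the main obstacle, the rest being a routine assembly of the main term and the dominant singleton error.
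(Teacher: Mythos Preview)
Your proposal is correct and follows essentially the same route as the paper: apply the convolutional identity \eqref{convo_id_1}, insert the Piltz estimate \eqref{Piltz} into the inner sums, and control the resulting errors via the enlarged region of absolute convergence of $\Delta$ (note that the paper records this region as $\Re s_i>0$ for each $i$ together with $\Re(s_1+\cdots+s_r)>1$, the first condition being implicitly needed for your claim about the slowest-decaying Euler term). The paper's own proof is only a sketch that defers the details to \cite[Th.~3.3]{TotZha2018}, so your explicit handling of the error pieces --- particularly the borderline full set $S=\{1,\ldots,r\}$ --- supplies precisely the bookkeeping omitted there.
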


\begin{proof}  We have, according to the convolutional identity \eqref{convo_id_1},
\begin{equation*} 
\sum_{\substack{n_1,\ldots,n_r\le x \\ \gcd(n_1,\ldots,n_r)=1}} \tau_{t_1}(n_1) \cdots \tau_{t_r}(n_r)=
\sum_{d_1e_1=n_1\le x,\ldots,d_re_r=n_r\le x} \tau_{t_1}(d_1) \cdots \tau_{t_r}(d_r)  F_{\tau_{t_1},\ldots,\tau_{t_r}}(e_1,\ldots,e_r),
\end{equation*}
\begin{equation*} 
= \sum_{e_1,\ldots, e_r \le x} F_{\tau_{t_1},\ldots,\tau_{t_r}}(e_1,\ldots,e_r) \sum_{d_1\le x/e_1} \tau_{t_1}(d_1) \cdots \sum_{d_r\le x/e_r} \tau_{t_r}(d_r).
\end{equation*}

Now by using formulas \eqref{Piltz} and the fact that the infinite product $\Delta(\tau_{t_1},\ldots,\tau_{t_r},s_1,\ldots,s_r)$ given by \eqref{Delta_tau} is absolutely convergent provided that
$s_i\in \C$, $\Re s_i>0$ ($1\le i \le r$), $\Re (s_1+\cdots +s_r)>1$, we obtain the desired formula. 
For the details see the proof of \cite[Th.\ 3.3]{TotZha2018}, which is a generalization of the present result. 
\end{proof}

\begin{corollary}  Let $r\ge 2$. Then for every $\varepsilon>0$,
\begin{equation*}
\sum_{\substack{n_1,\ldots,n_r\le x \\ \gcd(n_1,\ldots,n_r)=1}} \tau(n_1)\cdots \tau(n_r) = x^r T(\log x)+ O(x^{r-1+\theta +\varepsilon})
\end{equation*}
where $T(u)$ is a polynomial in $u$ of degree $r$ having the leading coefficient $K_r$, where
\begin{equation*}
K_r= \prod_p \Bigl(1- \Bigl(\frac{2p-1}{p^2}\Bigr)^r \Bigr)=
\prod_p \biggl(1 - \sum_{i=0}^r (-1)^i \binom{r}{i} \frac{2^{r-i}}{p^{r+i}} \biggr);
\end{equation*}
in particular, 
\begin{align} 
K_2 &= \prod_p \Bigl(1 - \frac{4}{p^2}+\frac{4}{p^3}-\frac1{p^4} \Bigr), \label{K_2} \\
K_3 &= \prod_p \Bigl(1 - \frac{8}{p^3}+\frac{12}{p^4}-\frac{6}{p^5} +\frac1{p^6}\Bigr), \nonumber
\end{align}
and $\theta$ is the exponent in Dirichlet's divisor problem. 
\end{corollary}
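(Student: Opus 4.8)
The plan is to specialize the preceding corollary (the one for $\tau(n_1)\cdots\tau(n_r)$ under $\gcd(n_1,\ldots,n_r)=1$) to the case $t_i=2$ for all $i$ and then invoke Theorem~\ref{Th_asympt_1} directly. Since each $\tau=\tau_2$ corresponds to $t_i=2$, Theorem~\ref{Th_asympt_1} gives an asymptotic of the form $x^r Q(\log x)+O(x^{r-1+\theta+\varepsilon})$, where $Q$ has degree $t_1+\cdots+t_r-r = 2r-r = r$ and leading coefficient $\Delta(\tau,\ldots,\tau,1,\ldots,1)/\bigl((2-1)!\bigr)^r = \Delta(\tau,\ldots,\tau,1,\ldots,1)$. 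So writing $T:=Q$, the degree and error term are immediate, with $\theta$ the Dirichlet divisor exponent $\vartheta_{2}$. The only real content left is to evaluate the constant $K_r := \Delta(\tau,\ldots,\tau,1,\ldots,1)$ and put it in the stated closed form.

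The first step is therefore to compute the local Euler factor of $\Delta$ at $s_1=\cdots=s_r=1$. From the displayed corollary for $\tau$ (the $t_i=2$ case of Corollary~\ref{Cor_tau_gen}), the factor at a prime $p$ is
\begin{equation*}
1+(-1)^{r-1}\sum_{1\le a_1,\ldots,a_r\le 2}\frac{(-2)^{\#\{i:\,a_i=1\}}}{p^{a_1+\cdots+a_r}}.
\end{equation*}
I would recognize the inner sum as a product over the $r$ coordinates: each coordinate $a_i$ ranges over $\{1,2\}$ independently, contributing $-2/p$ when $a_i=1$ and $+1/p^2$ when $a_i=2$. Hence the sum factors as $\prod_{i=1}^r\bigl(-\tfrac{2}{p}+\tfrac{1}{p^2}\bigr) = \bigl(\tfrac{1-2p}{p^2}\bigr)^r = (-1)^r\bigl(\tfrac{2p-1}{p^2}\bigr)^r$. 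Multiplying by the prefactor $(-1)^{r-1}$ turns this into $-\bigl(\tfrac{2p-1}{p^2}\bigr)^r$, so the local factor becomes $1-\bigl(\tfrac{2p-1}{p^2}\bigr)^r$, which is exactly the claimed expression for $K_r$.

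The second step is to expand $\bigl(\tfrac{2p-1}{p^2}\bigr)^r$ by the binomial theorem to match the alternative form. Writing $(2p-1)^r=\sum_{i=0}^r\binom{r}{i}(2p)^{r-i}(-1)^i$ and dividing by $p^{2r}$ gives $\sum_{i=0}^r(-1)^i\binom{r}{i}2^{r-i}p^{-(r+i)}$, so the local factor is $1-\sum_{i=0}^r(-1)^i\binom{r}{i}\tfrac{2^{r-i}}{p^{r+i}}$, as stated. The specializations $K_2$ and $K_3$ then follow by writing out the binomial sum for $r=2$ and $r=3$; for $r=2$ one gets $1-\tfrac{4}{p^2}+\tfrac{4}{p^3}-\tfrac{1}{p^4}$ and for $r=3$ one gets $1-\tfrac{8}{p^3}+\tfrac{12}{p^4}-\tfrac{6}{p^5}+\tfrac{1}{p^6}$.

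There is essentially no genuine obstacle here, since the asymptotic machinery is entirely supplied by Theorem~\ref{Th_asympt_1}: the result is a clean corollary. The one point requiring a little care is verifying that the infinite product defining $K_r$ converges, but this is guaranteed by the convergence condition noted in the proof of Theorem~\ref{Th_asympt_1} (namely $\Re(s_1+\cdots+s_r)>1$, which at $s_i=1$ reads $r>1$, valid since $r\ge 2$); concretely, the local factor is $1-\bigl(\tfrac{2p-1}{p^2}\bigr)^r = 1+O(p^{-r})$ with $r\ge 2$, so the product converges absolutely. The main thing to get right is thus the bookkeeping in recognizing the inner sum as a product over coordinates, which is what collapses the expression to $1-\bigl(\tfrac{2p-1}{p^2}\bigr)^r$.
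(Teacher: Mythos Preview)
Your proposal is correct and follows essentially the same route as the paper: specialize Theorem~\ref{Th_asympt_1} to $t_i=2$ and then simplify the leading constant. The only cosmetic difference is that the paper reads off $K_r$ directly from the product form \eqref{D_form_one} (where, with $g_{ij}(p)=1$ and $t=2$, one has $1-\prod_{j=1}^2(1-1/p)=(2p-1)/p^2$ immediately), whereas you start from the expanded Euler factor in the $\tau$-corollary and re-factor it back into $\bigl(\tfrac{2p-1}{p^2}\bigr)^r$; both computations are the same identity read in opposite directions.
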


\begin{proof} Apply Theorem \ref{Th_asympt_1} in the case $t_i=2$ ($\le i\le r$). The representation of $K_r$ follows 
from \eqref{D_form_one} for $g_{ij}(p)=1$ ($1\le i\le r$, $1\le j\le 2$).
\end{proof}

We note that in the case $r=2$ this result has been proved in \cite[Lemma\, 3.3]{NowTot2014} by analytic methods,
with a weaker error term.

\begin{theorem} \label{Th_asympt_2} Let $r\ge 2$ and let $t_i\ge 2$ \textup{($1\le i\le r$)} be fixed integers.
Then for every $\varepsilon>0$,
\begin{equation*}
\sum_{\substack{n_1,\ldots,n_r\le x \\ \gcd(n_i,n_j)=1 \, (i\ne j)}} \tau_{t_1}(n_1) \cdots \tau_{t_r}(n_r) 
= x^r \overline{Q}(\log x)+ 
O(x^{r-1+\max_{1\le i\le r} \vartheta_{t_i} +\varepsilon}),
\end{equation*}
where $\overline{Q}(u)$ is a polynomial in $u$ of degree $t_1+\cdots+t_r-r$ having the leading coefficient
\begin{equation*}
\frac{\overline{\Delta}(\tau_{t_1},\ldots,\tau_{t_r},1,\ldots,1)}{(t_1-1)!\cdots (t_r-1)!},
\end{equation*}
where $\overline{\Delta}(\tau_{t_1},\ldots,\tau_{t_r},1,\ldots,1)$ is obtained from \eqref{overline_Delta_tau}
for $s_1=\cdots=s_r=1$, and $\vartheta_{t_i}$ are the exponents in the Piltz divisor problems for $\tau_{t_i}$ \textup{($1\le i \le r$)}. 
\end{theorem}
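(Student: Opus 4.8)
The plan is to follow the proof of Theorem \ref{Th_asympt_1} verbatim in structure, replacing the convolutional identity \eqref{convo_id_1} by its pairwise analogue \eqref{convo_id_2}. Taking $f_i=\tau_{t_i}$ in \eqref{convo_id_2} and interchanging the order of summation, I would first write
\begin{equation*}
\sum_{\substack{n_1,\ldots,n_r\le x \\ \gcd(n_i,n_j)=1 \, (i\ne j)}} \tau_{t_1}(n_1) \cdots \tau_{t_r}(n_r) = \sum_{e_1,\ldots, e_r \le x} \overline{F}_{\tau_{t_1},\ldots,\tau_{t_r}}(e_1,\ldots,e_r) \prod_{i=1}^r \sum_{d_i\le x/e_i} \tau_{t_i}(d_i),
\end{equation*}
so that the $r$ inner sums decouple into Piltz summatory functions.

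Next I would insert the Piltz asymptotics \eqref{Piltz} into each factor, writing $\sum_{d_i\le x/e_i}\tau_{t_i}(d_i) = (x/e_i)P_{t_i}(\log(x/e_i)) + O\bigl((x/e_i)^{\vartheta_{t_i}+\varepsilon}\bigr)$, and multiply out. The product of the $r$ main terms equals $x^r(e_1\cdots e_r)^{-1}\prod_{i=1}^r P_{t_i}(\log x-\log e_i)$; expanding each $P_{t_i}$ in powers of $\log x$ and summing against $\overline{F}_{\tau_{t_1},\ldots,\tau_{t_r}}(e_1,\ldots,e_r)/(e_1\cdots e_r)$, extended to all $e_1,\ldots,e_r\ge 1$, produces the main term $x^r\overline{Q}(\log x)$. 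Since $\deg P_{t_i}=t_i-1$, the degree of $\overline{Q}$ is $\sum_{i=1}^r(t_i-1)=t_1+\cdots+t_r-r$. The top-degree coefficient comes from the product of the leading terms $\frac1{(t_i-1)!}(\log x)^{t_i-1}$, which after summation picks up the factor $\sum_{e_1,\ldots,e_r\ge 1}\overline{F}_{\tau_{t_1},\ldots,\tau_{t_r}}(e_1,\ldots,e_r)/(e_1\cdots e_r)$; by the remark following Theorem \ref{Th_2} this series equals $\overline{\Delta}(\tau_{t_1},\ldots,\tau_{t_r},1,\ldots,1)$ from \eqref{overline_Delta_tau}, yielding exactly the stated leading coefficient $\overline{\Delta}(\tau_{t_1},\ldots,\tau_{t_r},1,\ldots,1)/((t_1-1)!\cdots(t_r-1)!)$.

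The engine behind both the extension of the $e$-sums to infinity and the error estimate is the absolute convergence of the multiple Dirichlet series $\overline{\Delta}$. Inspecting \eqref{overline_Delta_tau}, every non-constant Euler-factor term carries a denominator $p^{a_{\ell_1}s_{\ell_1}+\cdots+a_{\ell_i}s_{\ell_i}}$ with $i\ge 2$ and all $a_{\ell_m}\ge 1$; hence, assuming $\Re s_k>0$ for all $k$, the smallest exponent real part is attained for $i=2$ with both indices equal to $1$, so $\overline{\Delta}$ converges absolutely whenever $\Re(s_k+s_\ell)>1$ for all $1\le k<\ell\le r$. This region contains a neighborhood of $s_1=\cdots=s_r=1$, which justifies completing the main-term sums. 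For the error term, the dominant contribution arises from replacing a single factor by its error $O\bigl((x/e_i)^{\vartheta_{t_i}+\varepsilon}\bigr)$ and the remaining $r-1$ factors by their main terms of size $x/e_j$; summing against $|\overline{F}_{\tau_{t_1},\ldots,\tau_{t_r}}|$ amounts to evaluating the associated Dirichlet series at $s_i=\vartheta_{t_i}+\varepsilon$ and $s_j=1$ ($j\ne i$), which lies in the convergence region since $\vartheta_{t_i}>0$ forces $\Re(s_i+s_j)=1+\vartheta_{t_i}+\varepsilon>1$. This gives $O(x^{r-1+\vartheta_{t_i}+\varepsilon})$, and maximizing over $i$ yields the claimed error $O(x^{r-1+\max_{1\le i\le r}\vartheta_{t_i}+\varepsilon})$; mixed terms involving two or more Piltz error factors are of smaller order because each $\vartheta_{t_i}<1$.

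The main obstacle is the error bookkeeping rather than any new idea: one must confirm that the weight $\overline{F}_{\tau_{t_1},\ldots,\tau_{t_r}}$ decays fast enough for all the relevant Dirichlet series to converge at the shifted abscissae (this is precisely the pairwise condition $\Re(s_k+s_\ell)>1$, which the values $\vartheta_{t_i}+\varepsilon$ respect), and that the various $\varepsilon$-losses can be absorbed uniformly. These steps are entirely parallel to the relatively-prime case, so the details may be completed exactly as in the proof of \cite[Th.\ 3.3]{TotZha2018}, with the condition $\gcd(n_1,\ldots,n_r)=1$ and \eqref{convo_id_1} replaced throughout by the pairwise-coprimality condition and \eqref{convo_id_2}.
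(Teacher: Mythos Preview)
Your proposal is correct and follows essentially the same approach as the paper: use the convolutional identity \eqref{convo_id_2}, decouple into Piltz summatory functions, insert \eqref{Piltz}, and invoke absolute convergence of $\overline{\Delta}$ in the region $\Re s_i>0$, $\Re(s_i+s_j)>1$ (exactly the condition the paper states), deferring the remaining bookkeeping to \cite[Th.\ 3.3]{TotZha2018}. You have in fact supplied more detail than the paper's own proof, which merely records the decomposition and the convergence region before citing \cite{TotZha2018}.
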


\begin{proof} Similar to the proof of Theorem \ref{Th_asympt_1}. By the convolutional identity \eqref{convo_id_2} we have
\begin{equation*} 
\sum_{\substack{n_1,\ldots,n_r\le x \\ \gcd(n_i,n_j)=1\, (i\ne j)}} \tau_{t_1}(n_1) \cdots \tau_{t_r}(n_r)=
\sum_{d_1e_1=n_1\le x,\ldots,d_re_r=n_r\le x} \tau_{t_1}(d_1) \cdots \tau_{t_r}(d_r)  \overline{F}_{\tau_{t_1},\ldots,\tau_{t_r}}(e_1,\ldots,e_r),
\end{equation*}
\begin{equation*} 
= \sum_{e_1,\ldots, e_r \le x} \overline{F}_{\tau_{t_1},\ldots,\tau_{t_r}}(e_1,\ldots,e_r) \sum_{d_1\le x/e_1} \tau_{t_1}(d_1) \cdots \sum_{d_r\le x/e_r} \tau_{t_r}(d_r). 
\end{equation*}

Now use formulas \eqref{Piltz} and the fact that the infinite product $\overline{\Delta}(\tau_{t_1},\ldots,\tau_{t_r},s_1,\ldots,s_r)$ given by 
\eqref{overline_Delta_tau} is absolutely convergent provided that
$s_i\in \C$, $\Re s_i>0$ ($1\le i \le r$), $\Re (s_i+s_j)>1$ ($1\le i<j\le r$). 
This is also a special case of \cite[Th.\ 3.3]{TotZha2018}.
\end{proof}

\begin{corollary}  Let $r\ge 2$. Then for every $\varepsilon>0$,
\begin{equation*}
\sum_{\substack{n_1,\ldots,n_r\le x \\ \gcd(n_i,n_j)=1\, (i\ne j)}} \tau(n_1)\cdots \tau(n_r) = x^r \overline{T}(\log x)+ O(x^{r-1+\theta +\varepsilon})
\end{equation*}
where $\overline{T}(u)$ is a polynomial in $u$ of degree $r$ having the leading coefficient $\overline{K}_r$, where
\begin{equation*}
\overline{K}_r= \prod_p \Bigl(1-\frac1{p}\Bigr)^{2(r-1)} \Bigl(1+\frac{(r-1)(2p-1)}{p^2}\Bigr),
\end{equation*}
in particular, $\overline{K}_2=K_2$ given by \eqref{K_2},
\begin{equation*}
\overline{K}_3= \prod_p \Bigl(1 - \frac{12}{p^2}+\frac{28}{p^3}-\frac{27}{p^4} +\frac{12}{p^5}-\frac{2}{p^6}\Bigr),
\end{equation*}
and $\theta$ is the exponent in Dirichlet's divisor problem. 
\end{corollary}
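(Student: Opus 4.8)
The plan is to apply Theorem~\ref{Th_asympt_2} in the case $t_1=\cdots=t_r=2$. Since $\tau_2=\tau$ and the Piltz exponent $\vartheta_2$ is precisely the Dirichlet divisor exponent $\theta$, this at once delivers a main term $x^r\overline{T}(\log x)$ with $\overline{T}=\overline{Q}$ of degree $t_1+\cdots+t_r-r=2r-r=r$ and an error term $O(x^{r-1+\theta+\varepsilon})$. Because each factor $(t_i-1)!=1$, the leading coefficient is simply
\begin{equation*}
\overline{K}_r=\overline{\Delta}(\tau,\ldots,\tau,1,\ldots,1),
\end{equation*}
so the only real work is to put this infinite product into the closed form stated.

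Instead of expanding the fully developed shape \eqref{overline_Delta_tau}, I would go back to the unsimplified local factor $K(p)$ from \eqref{express} in the proof of Theorem~\ref{Th_2}, namely
\begin{equation*}
K(p)=(1-r)\prod_{i=1}^r(1-y_i)+\sum_{i=1}^r\prod_{\substack{k=1\\k\ne i}}^r(1-y_k),
\end{equation*}
where $y_i$ is defined in \eqref{def_y_i}. Specializing to $t_i=2$ with $g_{ij}(p)=1$ gives $G_{i1}(p)=\binom{2}{1}=2$ and $G_{i2}(p)=\binom{2}{2}=1$, so that $y_i=2p^{-s_i}-p^{-2s_i}$. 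The crucial observation is that at the diagonal point $s_1=\cdots=s_r=1$ all the $y_i$ coincide with a single value $y=(2p-1)/p^2$, which moreover factors as $1-y=(p-1)^2/p^2=(1-1/p)^2$.

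With every $y_i$ equal to $y$, the local factor collapses:
\begin{equation*}
K(p)=(1-r)(1-y)^r+r(1-y)^{r-1}=(1-y)^{r-1}\bigl(1+(r-1)y\bigr).
\end{equation*}
Inserting $1-y=(1-1/p)^2$ and $y=(2p-1)/p^2$ then yields precisely
\begin{equation*}
\overline{K}_r=\prod_p\Bigl(1-\frac1p\Bigr)^{2(r-1)}\Bigl(1+\frac{(r-1)(2p-1)}{p^2}\Bigr),
\end{equation*}
as claimed. The two displayed special cases are obtained by a direct expansion of a single Euler factor: for $r=2$ the product $(1-1/p)^2\bigl(1+(2p-1)/p^2\bigr)$ reproduces the factor in $K_2$ of \eqref{K_2}, and for $r=3$ multiplying $(1-1/p)^4$ by $1+2(2p-1)/p^2$ gives the stated degree-six polynomial in $1/p$.

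I do not expect a genuine obstacle in this argument. The one step that must be handled with care is the collapse of the $y_i$ at the symmetric point $s_i=1$: once one notices that the diagonal makes every elementary symmetric expression in \eqref{express} reduce to a power of the single quantity $y$, and that $1-y=(1-1/p)^2$, the closed form for $\overline{K}_r$ drops out with no further effort. Convergence of the product at $s_i=1$ is already guaranteed by Theorem~\ref{Th_asympt_2}, since there $\Re s_i=1>0$ and $\Re(s_i+s_j)=2>1$.
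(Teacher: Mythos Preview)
Your proposal is correct and follows essentially the same approach as the paper: apply Theorem~\ref{Th_asympt_2} with $t_i=2$, and derive the closed form for $\overline{K}_r$ from the unsimplified local factor in \eqref{express} with $g_{ij}(p)=1$. You have simply spelled out in detail the collapse $K(p)=(1-y)^{r-1}\bigl(1+(r-1)y\bigr)$ at the diagonal point, which the paper leaves implicit in its one-line reference to \eqref{express}.
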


\begin{proof} Apply Theorem \ref{Th_asympt_2} in the case $t_i=2$ ($\le i\le r$). The representation of $\overline{K}_r$ 
follows from \eqref{express} for $g_{ij}(p)=1$ ($1\le i\le r$, $1\le j\le 2$).
\end{proof}


\begin{thebibliography}{10}

\bibitem{Apo1976} T.~M.~Apostol, {\em Introduction to Analytic Number Theory}, Springer, 1976. 

\bibitem{Del1969} H.~Delange, On some sets of pairs of positive integers, {\em J. Number Theory} {\bf 1} (1969), 261--279.

\bibitem{NowTot2014} W.~G.~Nowak and L.~T\'oth, On the average number of subgroups of the group $\Z_m \times \Z_n$,
{\em Int. J. Number Theory} {\bf 10} (2014), 363--374.

\bibitem{TahDer2022} N.~Tahmi and A.~Derbal, Some multiple Dirichlet series
of completely multiplicative arithmetic functions, {\em Notes Number Theory Discrete Math.}
{\bf 28} (2022), 603--616.

\bibitem{Tot2014} L.~T\'oth, Multiplicative arithmetic functions of several variables:
a survey, in {\em Mathematics Without Boundaries, Surveys in Pure
Mathematics}, Springer, 2014, pp.\ 483--514.

\bibitem{Tot2016} L.~T\'oth, Counting $r$-tuples of positive integers with $k$-wise relatively prime components, {\em J. Number Theory}
{\bf 166} (2016), 105--116.

\bibitem{TotZha2018} L.~T\'oth and W.~Zhai, On multivariable averages of divisor functions, {\em J. Number Theory} {\bf 192} (2018), 251--269.
\end{thebibliography}
\end{document}